\def\co{{\colon \thinspace}}
 \def\A{{\mathcal A}}
\def\I{{\mathcal I}}
\def\P{{\mathcal P}}
\def\R{{\mathcal R}}
\def\T{{\mathcal T}}
\def\DD{{\mathbb D}}
\def\RR{{\mathbb R}}
\def\ZZ{{\mathbb Z}}
\def\dd{ {\partial}}
\def\lev{\mathop{\rm  lev}}
\newtheorem{thm}{Theorem}[section]
\newtheorem{lemma}[thm]{Lemma}
\newtheorem{prop}[thm]{Proposition}
\newtheorem{definition}{Definition}
\newtheorem{remark}[thm]{Remark}
\newtheorem{introthm}{Theorem}
\title{A Pointwise Ergodic Theorem for Fuchsian Groups}
\author{Alexander I. Bufetov}
\address{\begin{flushleft} \rm Alexander I. Bufetov\\ 
\rm Department of Mathematics,
Rice University,\\ MS 136, 6100 Main Street,
Houston, Texas 77251-1892, USA
;  and \\ 
The Steklov Institute of
  Mathematics, Russian Academy of Sciences, \\Gubkina str. 8,
  119991, Moscow, Russia\\
 {\texttt{aib1@rice.edu, bufetov@mi.ras.ru \\ http://math.rice.edu/$\sim$aib1/} }\\
  \end{flushleft}}
\author{Caroline Series}
\address{\begin{flushleft} \rm Caroline Series\\ 
{\rm Mathematics Institute, 
 University of Warwick \\
Coventry CV4 7AL, UK
 \\{\texttt{C.M.Series@warwick.ac.uk \\http://www.maths.warwick.ac.uk/$\sim$masbb/} }} \end{flushleft}} 
\date{\today}
\begin{document}

 \begin{abstract}
 We use Series' Markovian coding for words in Fuchsian groups and the Bowen-Series coding of limit sets to prove an ergodic theorem for Ces{\`a}ro averages of spherical averages in a Fuchsian group.  \\ 
 {\bf MSC classification: 20H10, 22D40, 37A30}   
  \\
 {\bf Keywords: Ergodic theorem, Fuchsian group} 
\end{abstract}

\maketitle

\section{Introduction}
\subsection{An ergodic theorem for surface groups}
Let $g\geq 2$ and let $\Gamma$ be the fundamental group of a surface of genus $g$ endowed
with a set of generators
\begin{equation}
\label{gener}
\{a_1, \dots, a_{2g}, b_1, \dots, b_{2g}\}
\end{equation}
satisfying the standard commutator relation
$$
\prod\limits_{i=1}^{2g} [a_i, b_i]=1.
$$

For $g\in\Gamma$, let $|g|$ stand for the length of the shortest
word in the generators (\ref{gener}) representing $g$.
Let
$$
S(n)=\{g\in\Gamma: |g|=n\}
$$
be the sphere of radius $n$ in $\Gamma$, and let
$K_n$ be the cardinality of $S(n)$.
A special case of the main result of this note is the following pointwise ergodic theorem for $\Gamma$:
\begin{introthm}
\label{main}
Let $\Gamma$ act ergodically on a probability space $(X, \nu)$ by measure-preserving
transformations, and, for $g\in\Gamma$, let $T_g$ be the corresponding transformation.
Then for any $\varphi\in L_1(X, \nu)$ we have
\begin{equation}
\frac1N\sum_{n=0}^{N-1}\frac1{K_n} \sum\limits_{g\in S(n)} \varphi\circ T_g   \to \int\limits_X fd\nu
\end{equation}
both $\nu$-almost surely and in $L_1(X, \nu)$ as $N\to\infty$.
\end{introthm}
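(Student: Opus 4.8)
The plan is to recode the spheres $S(n)$ as sets of admissible paths in Series' automaton, to present the Ces\`aro spherical averages as Ces\`aro averages of the iterates of a genuine Markov operator on an auxiliary probability space, and then to read the theorem off from the mean and maximal ergodic theorems (von Neumann and Dunford--Schwartz) for that operator. Concretely, I would first invoke Series' Markovian coding of $\Gamma$, with the combinatorics of the underlying automaton furnished by the Bowen--Series coding of the limit set: there is a finite directed graph with vertex set $\A$, a distinguished vertex $*$, and edges labelled by the generators (\ref{gener}), such that reading the labels along any admissible path produces a geodesic word, and the map assigning to an admissible path of length $n$ issuing from $*$ the product of its labels is a bijection onto $S(n)$. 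Passing to the recurrent part (at worst a sub-subshift), the adjacency matrix $M$ is irreducible with Perron--Frobenius eigenvalue $\lambda>1$ and strictly positive left and right eigenvectors $u,v$; in particular $K_n\sim c\lambda^{n}$ with $c>0$. For free groups one has $v\equiv\mathbf 1$ and the analysis below is essentially that of the first-named author's earlier work; the point special to the surface-group case is precisely that $v$ is not constant.

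For a generator $s$ let $V_s\colon L_1(X,\nu)\to L_1(X,\nu)$, $V_s\psi=\psi\circ T_s$, an isometry of every $L_p(X,\nu)$. On $\bigoplus_{a\in\A}L_1(X,\nu)$, equipped with the probability measure $\mu=\sum_{a\in\A}u_av_a\,(\delta_a\otimes\nu)$, define
$$
(Pf)(a,x)=\sum_{b\,:\,a\to b}\frac{v_b}{\lambda v_a}\,f\bigl(b,\,T_{s(a\to b)}x\bigr),
$$
where $s(a\to b)$ is the label of the edge $a\to b$. Using $Mv=\lambda v$ and $uM=\lambda u$ one checks $P\mathbf 1=\mathbf 1=P^{*}\mathbf 1$, so $P$ is a positive, doubly stochastic, hence Dunford--Schwartz operator on $L_1(\mu)$. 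Iterating, the scalar coefficients telescope along paths, and (reindexing $g\leftrightarrow g^{-1}$ and using $S(n)^{-1}=S(n)$) one obtains
$$
\bigl(P^{n}(\mathbf 1_{\A}\otimes\varphi)\bigr)(*,\cdot)=\sum_{g\in S(n)}p_n(g)\,\varphi\circ T_g,\qquad p_n(g)=\frac{v_{t(g)}}{\lambda^{n}v_{*}},
$$
a probability average over $S(n)$, where $t(g)$ is the terminal vertex of the path coding $g$; its weights are comparable, uniformly in $n$ and $g$, to the uniform weights $1/K_n$. Thus $\frac1N\sum_{n<N}P^{n}(\mathbf 1_{\A}\otimes\varphi)(*,\cdot)$ is a Ces\`aro average of \emph{weighted} spherical averages.

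Because $\{*\}\times X$ is a $\mu$-atom of positive mass, the Dunford--Schwartz theorem for $P$ gives that $\frac1N\sum_{n<N}P^{n}(\mathbf 1_{\A}\otimes\varphi)(*,\cdot)$ converges $\nu$-a.e.\ and in $L_1(\nu)$, with an accompanying weak $(1,1)$ maximal inequality; the comparability $p_n(g)\asymp 1/K_n$ then transfers this maximal inequality to $\sup_N\frac1N\sum_{n<N}\frac1{K_n}\sum_{g\in S(n)}|\varphi|\circ T_g$ on $X$. The limit is identified by the mean ergodic theorem: $\frac1N\sum_{n<N}P^{n}\to P_0$ strongly on $L_2(\mu)$, $P_0$ the orthogonal projection onto $\mathrm{Fix}(P)$, and irreducibility of $M$ together with ergodicity of the $\Gamma$-action forces $\mathrm{Fix}(P)$ to be the constants, so $P_0 f=\bigl(\int f\,d\mu\bigr)\mathbf 1$; evaluating at $*$ gives $\int\varphi\,d\nu$ for $\varphi\in L_2$, and, the Ces\`aro averages being $L_1$-contractions, density extends this to every $\varphi\in L_1$. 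Averaging in $n$ is essential here: the individual $P^n$ need not converge, since the peripheral spectrum of $P$ — which already contains the period of the recurrent part of the automaton — is in general nontrivial. Finally, to replace the weighted averages $\sum_g p_n(g)\,\varphi\circ T_g$ by the uniformly weighted averages $\frac1{K_n}\sum_{g\in S(n)}\varphi\circ T_g$ one writes their difference as a signed average over $S(n)$ of total mass zero whose weights depend on $g$ only through the bounded combinatorial datum $t(g)$, handles it via the ergodic theorem for an associated Markov operator together with equidistribution of terminal states of long geodesics, and then — for a.e.\ convergence — appeals to the Banach principle and the maximal inequality just obtained. This gives convergence of $\frac1N\sum_{n<N}\frac1{K_n}\sum_{g\in S(n)}\varphi\circ T_g$ to $\int_X\varphi\,d\nu$, both $\nu$-a.e.\ and in $L_1$, for every $\varphi\in L_1(X,\nu)$.

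The step I expect to be the real obstacle — and the one genuinely new in the non-free case — is this last transfer from the Markovian weights $p_n(g)$ to the uniform weights $1/K_n$: since these are only boundedly equivalent, the clean operator statements for $P$ do not apply verbatim, and one must control the discrepancy both in the Ces\`aro limit and at the level of a weak $(1,1)$ maximal inequality. This is where the full force of Series' coding (the bijection onto geodesic words), the Perron--Frobenius structure of the recurrent part of the automaton, and the equidistribution of terminal states of long geodesics all come into play; it is also the structural reason the statement concerns a Ces\`aro average of spherical averages rather than the spherical averages themselves, averaging in $n$ being exactly what neutralises the peripheral spectrum of $P$, equivalently any residual periodicity of the coding.
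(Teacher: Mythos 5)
Your overall strategy --- recode the spheres via the Bowen--Series automaton and reduce to an ergodic theorem for an associated Markov operator --- is indeed the paper's strategy, but the paper imports the operator theorem ready-made from \cite{bufrokh} (Proposition~\ref{matfun}), stated directly for the \emph{unweighted} averages $s_n^A\varphi=\sum_{|w|=n}A(w)\varphi\circ T_w\big/\sum_{|w|=n}A(w)$ with $A$ the $0$--$1$ transition matrix, i.e.\ already for uniform averages over admissible words. By instead building the normalized transfer operator with Perron--Frobenius weights $v_b/(\lambda v_a)$ you manufacture the ``weighted versus uniform'' discrepancy that you correctly identify as the main obstacle but do not resolve: $p_n(g)=v_{t(g)}/(\lambda^n v_*)$ is only \emph{boundedly} comparable to $1/K_n$, so the difference of the two averages is a mean-zero but not small perturbation, and your proposed fix (split by terminal state and invoke ``equidistribution of terminal states'') presupposes that the sub-averages over $\{g\in S(n): t(g)=a\}$ converge to $\rho_a\int\varphi\,d\nu$ --- a refinement of the theorem being proved, not an available input. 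The comparability does transfer the maximal inequality, but the Banach principle still requires a dense class on which the \emph{uniform} averages converge, which you have not produced. As sketched, this step is circular.

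Two further assertions are false or unjustified. First, the map from admissible length-$n$ paths to $S(n)$ is \emph{not} a bijection: the alphabet map $\A\to\Gamma_0$ is finite-to-one and distinct admissible symbol sequences can project to the same group element. Proposition~\ref{thm:almostinjective} shows the map is at most $2$-to-$1$ and that the doubled elements (those ending in special chains) number $O(1)$ per sphere, hence are negligible against $K_n$; this requires the whole special-chain analysis of Section~\ref{sec:alphabet} and cannot simply be assumed --- without it your $P^n(\mathbf 1\otimes\varphi)(*,\cdot)$ counts some $g\in S(n)$ twice. Second, ``irreducibility of $M$ together with ergodicity forces $\mathrm{Fix}(P)$ to be the constants'' is precisely where \emph{strict} irreducibility ($MM^T$ irreducible, i.e.\ the common-successor relation on states has a single class) is needed: a $P$-fixed indicator yields sets $E_a\subset X$ with $T_{s(a\to b)}E_a=E_b$ along edges, and to force all $E_a$ to coincide (hence be $\Gamma$-invariant) one must connect states through common successors, not merely through directed paths. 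The paper proves strict irreducibility for $|\dd \R|\ge 5$ (Proposition~\ref{thm:strictlyirreducible}) and exhibits irreducible but not strictly irreducible examples (the continued-fraction map for $SL(2,\ZZ)$), so this hypothesis is not removable by the argument you give. A smaller slip: with $V_s\psi=\psi\circ T_s$ your $P^n$ produces $\varphi\circ T_{s_n\cdots s_1}$, the \emph{reversal} of the coded word, which is not $(s_1\cdots s_n)^{-1}$; the reindexing via $S(n)=S(n)^{-1}$ does not repair this, though labelling edges by inverse generators would.
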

 Our main result, Theorem~\ref{generalfuchsian}, is that a similar ergodic theorem applies much more generally to any finitely generated Fuchsian group with a suitable choice of generating set, see Section~\ref{sec:final} for a precise statement.
 For numerous examples of actions to which this theorem applies, see~\cite{gor-nev, nevo}.
Theorem~\ref{generalfuchsian} is derived from a pointwise ergodic theorem for free semigroups
from \cite{bufrokh} whose formulation we shortly recall.  
For previous literature on  pointwise ergodic theorems for actions
of various classes of non-amenable discrete groups, see for example~\cite{nevo1, ns,ns1,grigor1,  mns,  Bow, gor-nev}.

\subsection{Ergodic theorems for free semigroups}

Let $(X,\nu)$ be a probability space
and let $T_1,\dots,T_m\colon X\to X$ be
$\nu$-preserving transformations.

Denote by $W_m$ the set of all finite words in the symbols $1,\dots,m$:
$$
W_m=\{w=w_1\dots w_n,\,w_i \in \{1,\dots,m\}\}.
$$
The length of a word $w$ is denoted by $|w|$.
For each $w\in W_m$, $w=w_1\dots w_n$,
define the transformation $T_w$ by the formula
$$
T_w=T_{w_1}\dots T_{w_n}. $$

Now let $A$ be an $m\times m$-matrix with non-negative entries.
For each $w\in W_m$, $w=w_1\dots w_n$, set
$$
A(w)=A_{w_1w_2}\dots A_{w_{n-1}w_n}.
$$

Now let $\varphi$ be a measurable function on $X$ and for each $n=0,1,\dots$, consider the expression
\begin{equation}\label{ssna}
s_n^A\varphi=\frac{\sum_{|w|=n}A(w)\varphi\circ T_w}{\sum_{|w|=n}A(w)},
\end{equation}
 where we assume that the denominator does not vanish.

Furthermore,  consider Ces{\`a}ro averages of the ``sphere averages'' $s_n^A$ and set:
\begin{equation}
c_N^A(\varphi)=\frac1N\sum_{n=0}^{N-1}s_n^A(\varphi).
\end{equation}

\begin{definition}
A matrix $A$ with non-negative entries is called
\textit{irreducible} if for some $n>0$ all entries of the matrix
$A+A^2+\dots +A^n$ are positive.
\end{definition}
\begin{definition}
A matrix $A$ with non-negative entries is called
\textit{strictly irreducible} if
$A$ is irreducible and
$AA^T$ is irreducible
(here $A^T$ stands for the transpose of $A$).
\end{definition}

A measurable subset $Y \subset  X$ will be called $T_1,\dots, T_m$- invariant
if its characteristic function $\chi_Y$ satisfies
$T_1\chi_Y=\dots=T_m\chi_Y=\chi_Y$.
Denote by $\mathcal{B}$ the $\sigma$-algebra of
all $T_1,\dots, T_m$- invariant subsets of $X$. Given  $\varphi\in L_1(X, \nu)$,   denote  by
${\mathbb E}(\varphi|\mathcal{B})$ the conditional expectation
of $\varphi$ with respect to ${\mathcal B}$.

We now recall Corollary 2 in \cite{bufrokh}:
\begin{prop}\label{matfun}
Let $A$ be a strictly irreducible $m\times m$ matrix.
Let $T_1,\dots,T_m$ be measure-preserving self-maps of a
probability space $(X,\nu)$. Let $\mathcal{B}$ be the
$\sigma$-algebra of $T_1,\dots,T_m$- invariant measurable sets. Then for each
$\varphi\in L_1(X,\nu)$ we have $c_N^A(T)\varphi\to {\mathbb E}(\varphi|\mathcal{B})$
almost everywhere and in $L_1(X,\nu)$ as $N\to\infty$.
\end{prop}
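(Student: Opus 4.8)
Proposition~\ref{matfun} is Corollary~2 of \cite{bufrokh}, so in the present paper it is recalled rather than reproved; here I only sketch the route one would take. The plan is to encode the matrix-weighted sphere averages through a single positive $L^1$-contraction coming from a vector-valued recursion, to apply the Chacon--Ornstein ratio ergodic theorem to that operator, and then to identify its fixed space --- and that last step is exactly where \emph{strict} irreducibility is needed. (This route has the advantage over a skew-product-plus-Birkhoff argument of not requiring a separate maximal inequality.)

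Peeling off the first letter of words of length $n$ gives the recursion $u_n^{(i)}=\sum_j A_{ij}\,(u_{n-1}^{(j)}\circ T_i)$, where $u_n^{(i)}=\sum_{|w|=n,\,w_1=i}A(w)\,\varphi\circ T_w$, with $u_1^{(i)}=\varphi\circ T_i$ and $\sum_i u_n^{(i)}=\sum_{|w|=n}A(w)\,\varphi\circ T_w$; so the vector $U_n=(u_n^{(i)})_i$ equals $\mathcal M^{\,n-1}U_1$ for the operator $(\mathcal M U)^{(i)}=\sum_j A_{ij}\,(U^{(j)}\circ T_i)$ on $L^1(X,\nu)^m$. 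By Perron--Frobenius, the irreducible matrix $A$ has spectral radius $\lambda>0$ with strictly positive left and right eigenvectors $\ell,h$; moreover strict irreducibility forces $A$ to be \emph{primitive} --- a nontrivial period would split $AA^{T}$ along the cyclic classes and make it reducible --- so $\lambda^{-n}A^{n}\to(\ell^{T}h)^{-1}h\ell^{T}$, with no periodicity to manage. Rescaling, $\widehat{\mathcal M}:=\mathcal M/\lambda$ is a positive contraction of $L^1(X,\nu)^m$ for the equivalent norm $\|U\|=\sum_i\ell_i\|U^{(i)}\|_{L^1(\nu)}$ (since $\sum_i\ell_i\|(\mathcal M U)^{(i)}\|_1\le\sum_j(\ell A)_j\|U^{(j)}\|_1=\lambda\sum_j\ell_j\|U^{(j)}\|_1$), it preserves the integral against $m:=(\sum_i\ell_i\delta_i)\otimes\nu$, and the constant vector $h$ is a strictly positive invariant density, so $\widehat{\mathcal M}$ is conservative.

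Now I would apply the Chacon--Ornstein theorem to $\widehat{\mathcal M}$ with denominator $h$, for which $\sum_{n=0}^{N-1}\widehat{\mathcal M}^{\,n}h=Nh$: this gives that $\frac1N\sum_{n=0}^{N-1}\widehat{\mathcal M}^{\,n}U_1$ converges $\nu$-a.e.\ to the projection $P U_1$ of $U_1$ onto the fixed space of $\widehat{\mathcal M}$. Since $s_n^A\varphi=(\mathbf 1^{T}U_n)/(\mathbf 1^{T}A^{n-1}\mathbf 1)=(\mathbf 1^{T}\widehat{\mathcal M}^{\,n-1}U_1)/(\mathbf 1^{T}(A/\lambda)^{n-1}\mathbf 1)$ and the denominator tends to $(\mathbf 1^{T}h)(\ell^{T}\mathbf 1)/(\ell^{T}h)$, passing to Ces{\`a}ro averages --- and absorbing the vanishing correction from the denominator, harmless because the averages $\frac1N\sum_{n}|\widehat{\mathcal M}^{\,n-1}U_1|$ are a.e.\ bounded --- yields $c_N^A\varphi\to(\ell^{T}h)(\mathbf 1^{T}h)^{-1}(\ell^{T}\mathbf 1)^{-1}\mathbf 1^{T}P U_1$ $\nu$-a.e.; convergence in $L^1(X,\nu)$ follows for bounded $\varphi$ by dominated convergence and then for all $\varphi\in L^1(X,\nu)$ by density, the $c_N^A$ being $L^1$-contractions. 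Everything thus reduces to showing that the fixed space of $\widehat{\mathcal M}$ is exactly $\{h\psi:\psi\in L^1(X,\mathcal B)\}$; granting that, the facts that $\widehat{\mathcal M}$ commutes with multiplication by $\mathcal B$-measurable functions of $x$, preserves $\int\cdot\,dm$, and satisfies $\mathbb E_\nu[\varphi\circ T_i\mid\mathcal B]=\mathbb E_\nu[\varphi\mid\mathcal B]$ give $P U_1=(\ell^{T}\mathbf 1)(\ell^{T}h)^{-1}h\,\mathbb E(\varphi\mid\mathcal B)$, so the limit is exactly $\mathbb E(\varphi\mid\mathcal B)$.

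The main obstacle is precisely this fixed-space identification. If $\widehat{\mathcal M}U=U$, pairing with $\ell$ shows the vector of norms $(\|U^{(i)}\|_1)_i=:v$ satisfies $\lambda v\le Av$ with $v\ge0$, hence $v$ is proportional to $h$; but one must then exploit the irreducibility of $AA^{T}$ to conclude that the functions $U^{(i)}$ coincide for all $i$ and are $\mathcal B$-measurable in $x$. Without that hypothesis the step genuinely fails: for the $2\times2$ matrix with $A_{12}=A_{21}=1$ and $A_{11}=A_{22}=0$, every $(\phi,\phi\circ T_2)$ with $\phi$ invariant under $T_2T_1$ is fixed, and if $T_1=T_2^{-1}$ one gets $c_N^A\varphi\to\tfrac12\varphi+\tfrac14(\varphi\circ T_1+\varphi\circ T_1^{-1})$ instead of $\mathbb E(\varphi\mid\mathcal B)$. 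This is the matrix-setting analogue of the familiar bipartiteness obstruction for sphere averages in free groups, and handling it is what forces the extra hypothesis. By contrast the recursion, the Perron--Frobenius estimates making $\widehat{\mathcal M}$ a conservative contraction, the invocation of Chacon--Ornstein, and the bookkeeping with the normalising constants are all routine.
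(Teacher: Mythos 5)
The paper contains no proof of Proposition~\ref{matfun}: it is imported verbatim as Corollary~2 of \cite{bufrokh}, so there is no in-paper argument to compare yours against. You correctly recognise this, and the route you sketch --- encoding the weighted sphere sums as iterates of a vector-valued positive operator on $L_1(X,\nu)^m$, normalising by the Perron eigenvalue, invoking Chacon--Ornstein (equivalently, after conjugating by the Perron vector $h$, Hopf/Dunford--Schwartz for the resulting measure-preserving Markov operator), and then identifying the fixed space using irreducibility of $AA^T$ --- is essentially the method of the cited reference and of Bufetov's related work \cite{bufann}. Your observation that strict irreducibility forces $A$ to be aperiodic (a period $p\ge 2$ makes $AA^T$ block-diagonal along the cyclic classes) is correct, and your $2\times 2$ example with $T_1=T_2^{-1}$ is a genuine and instructive witness that the conclusion fails without the $AA^T$ hypothesis.

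Two caveats. First, a bookkeeping slip: with the paper's convention $T_w=T_{w_1}\cdots T_{w_n}$, so that $\varphi\circ T_w=(\varphi\circ T_{w_1\cdots w_{n-1}})\circ T_{w_n}$, the clean recursion indexes words by their \emph{last} letter and reads $u_n^{(i)}=\sum_j A_{ji}\,(u_{n-1}^{(j)}\circ T_i)$; your version mixes the first-letter indexing with the wrong slot of $A$, so the operator you write down attaches the weight $A_{ij}$ to the word $ji$. This transposes $A$ throughout but does not damage the architecture (the column sums $\mathbf 1^T U_n$ are unaffected and Perron--Frobenius is symmetric under transposition); it just needs to be fixed consistently, including which of $AA^T$ or $A^TA$ the identification step actually uses. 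Second, and more importantly, the one step where strict irreducibility does real mathematical work --- showing that a fixed point $U$ of the normalised operator has components $U^{(i)}=h_i\psi$ with a single $\mathcal B$-measurable $\psi$, rather than merely that the vector of norms is proportional to $h$ --- is exactly the step you label ``the main obstacle'' and then grant. Everything downstream of it (the computation of the projection of $U_1$ and the cancellation of the normalising constants, which I checked and which do yield ${\mathbb E}(\varphi|\mathcal B)$) is routine, so as a self-contained proof the proposal is incomplete at precisely its load-bearing point; as a reconstruction of why \cite{bufrokh} needs strict irreducibility and how the proof there is organised, it is accurate.
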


Theorems \ref{main} and \ref{generalfuchsian} will be derived from Proposition \ref{matfun}
using the Markov coding for Fuchsian groups introduced in~\cite{BoS}, see also~\cite{SInf, STrieste}.

\section{Markov Maps for Fuchsian Groups}
\label{sec:markov}

Let $\Gamma$  be a finitely generated non-elementary 
Fuchsian group acting in the hyperbolic disk $\DD$.
The Markov coding, originally introduced  in~\cite{BoS} to encode limit points of $\Gamma$ as infinite words in a fixed set of generators, also gives a canonical shortest form for  words in $\Gamma$. The coding   is defined relative to a  fixed choice of
fundamental domain  $\R$ for $\Gamma$, which we
suppose  is a finite sided convex polygon with
vertices contained in $ \DD \cup \dd \DD$,
such that the interior angle at each vertex is strictly less than $\pi$.
By a \emph{side} of $\R$ we mean the closure in $\DD$ of the
geodesic arc joining a
pair of adjacent vertices.
We allow the infinite area case in which some adjacent vertices on $ \dd \DD$
are joined by an arc contained in $ \dd \DD$; we do not
count these arcs as sides of $\R$.
We assume that the sides of $\R$ are paired; that is, for each side $s$
of $\R$ there is a (unique) element $e   \in \Gamma$ such that
$e(s)$ is also a side of $\R$  and such that $\R$ and $e(\R)$ are adjacent
along $e(s)$.  (Notice that this includes the possibility that
$e(s) = s$,  in which case $e$ is elliptic of order $2$
and the side $s$ contains the   fixed point of $e$ in its interior.
The condition  that the vertex angle is strictly less than $\pi$  excludes the possibility that
the fixed point of $e$ is counted as a vertex of $\R$.)

We denote by $\dd \R$ the union of the sides of $\R$, in other words,
$\dd \R$ is the part of the  boundary of $\R$ inside the disk $\DD$.
 Each side of $\dd \R$ is assigned two labels, one interior to $\R$ and one exterior, in such a way that 
the interior and exterior labels are mutually inverse elements of $\Gamma$.
We label  the side $s \subset \dd R$ interior to $\R$   by  $e$ if $e$ carries $s$ to another side $e(s)$  of $\R$, while we label the 
same side exterior to $\R$ by $e^{-1}$, see Figure~\ref{fig:labelling}. With this convention, $\R$ and $e^{-1}(\R)$ are adjacent along the side whose \emph{interior} label is $e$, while the side  $e(s)$  has interior label $e^{-1}$. 

Let $\Gamma_0$ be the set of labels of sides of
$\R $. The labelling extends to a $\Gamma$-invariant labelling of all sides of the tessellation  $\T$ of $\DD$ by images of $\R$. The conventions have been chosen in such a way that if two regions $g \R, h\R$ are adjacent along a common side $s$, then  $h^{-1}g \in \Gamma_0$ and the label on $s$ interior to $g \R$ is $h^{-1}g$, while that on the side interior to $h\R$ is $g^{-1}h$. 
Suppose that $O$ is a fixed basepoint in $\R$ and that $\gamma$ is an oriented path in $\DD$ from $O$ to $gO$, $g \in \Gamma$, which avoids   all vertices of  $\T$,   passing through in order adjacent regions  $\R = g_0\R, g_1\R,  \ldots, g_n \R = g\R$. Then the  labels of the sides crossed by $\gamma$, read in such a way that if $\gamma$ crosses from $g_{i-1}\R $ into $g_i\R$ we read off the label $e_i = g_{i-1}^{-1} g_i$ of the common side  interior to $g_i\R$, 
are in order $e_{1},  e_2, \ldots, e_n$ so that   $g= e_1 e_2 \ldots e_n$.
This proves the well known fact that  $\Gamma_0$ generates $\Gamma$, see for example~\cite{Bea}.   
In particular, if we read off labels round a   small loop round vertex $v$ of $\R$, we obtain the \emph{vertex cycle} at $v$ with corresponding \emph{vertex relation}  $ e_1 e_2 \ldots e_n = {\rm id}$. The generating sets implicit in Theorem~\ref{generalfuchsian} are obtained in this way.

\begin{figure}[hbt] 
\centering 
\includegraphics[height=5cm]{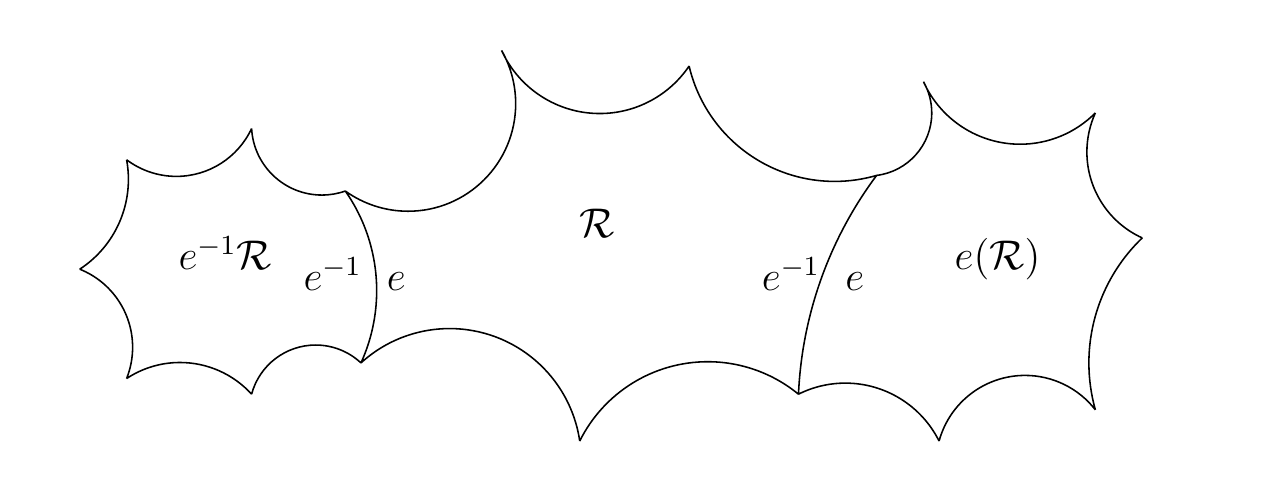} 
\caption{Labelling the sides of the fundamental domain $\R$. The label $e$ appears interior to $\R$ on  the side of $\R$ adjacent to the region $e^{-1}\R$.}
\label{fig:labelling}
\end{figure} 

Following~\cite{BoS}, the fundamental domain $\R$
is said to have \emph{even corners} if for each side $s$ of $\R$,
the complete geodesic in $\DD$ which
extends $s$ is contained in the sides of 
$\T$. 
This condition is satisfied for example, by the regular $4g$-gon of interior angle ${\pi}/{2g}$ whose sides can be paired with the generating set of Theorem~\ref{main} to form a surface of genus $g$. If the path $\gamma$ from $O$ to $gO$ described above is a hyperbolic geodesic and $\R$ has even corners, then the corresponding representation of $g$ by the word $  e_{1}  e_2 \ldots e_n$ is shortest in $(\Gamma, \Gamma_0)$, see~\cite{BiS, STrieste}.

Let $|\dd \R|$  denote the number of sides of $\R$. 
 In~\cite{BoS} we showed that, subject to certain restrictions if $|\dd \R| \le 4$,
one can associate to any  fundamental domain
with even corners
  an alphabet $\A$ and a transition matrix $P$, so  that
$\A$  is mapped by a finite-to-one map $\pi$
onto $\Gamma_0$, in such a way that the
obvious extension of $\pi$ to a map
from the set of finite sequences  with alphabet $\A$
and allowed transitions defined by $P$ to the group $\Gamma$
is surjective.  We call this map $\pi$, the  \emph{alphabet map}.
A crucial feature of the  alphabet map is that every word in its image 
  is  shortest in the word metric
on $(\Gamma,\Gamma_0)$, see~\cite{STrieste} and Theorem~\ref{thm:hypothesis} below.
In particular  $\pi$ preserves length, that is, the image under $\pi$ of a sequence
of $n$ symbols in $\A$  is an element $g \in \Gamma$ of shortest length $n$ relative to the generators $\Gamma_0$.   
Thus to sum over $g \in \Gamma$ for which $|g| = n$ as required by Theorems \ref{main} and~\ref{generalfuchsian}, we need only sum over all allowable finite words of length $n$  in the alphabet $\A$.

It follows that in order to apply Proposition~\ref{matfun}, we need  only check whether
the transition matrix $P$  is irreducible and strictly irreducible and that
$\pi$ is, in a precise sense, almost bijective to $\Gamma$.
Our main work is to show that  (subject to some  restrictions if $|\dd \R| < 5$)  this is indeed the case, see Propositions~\ref{thm:irred}, \ref{thm:strictlyirreducible}  and~\ref{prop:almostinjective}. 

Notice that the statements of Theorems~\ref{main} and ~\ref{generalfuchsian} only involve  enumerating  words in $(\Gamma,\Gamma_0)$ and are  independent of the precise geometry of $\R$. Thus for example  one can replace the regular $4g$-gon with any hyperbolic octagon whose interior angles sum to $2 \pi$ and with generators given by  the same combinatorial pattern of side pairings.    We elaborate on this observation in Section~\ref{sec:ubiquity}, where we explain why requiring a generating set which comes from a fundamental domain with even corners 
is much more general that it appears, leading to the general statement in Theorem~\ref{generalfuchsian}.

\subsection{The Coding}\label{sec:coding}

We briefly review the coding as explained in~\cite{STrieste},
in which it appears in a simpler and  more
general form than
in the original version~\cite{BoS}.

By abuse of notation  from now on we think of the tessellation $\T$ as the union of its sides, precisely  $\T= \cup \{g(\dd \R): g\in \Gamma\}$.
We assume throughout our discussion that $\R$ has even corners, so that $\T$ is a union of complete geodesics in $\DD$.
Let $\P \subset \dd \DD$ be the collection of endpoints
of those geodesics in $\T$ which meet $\dd \R$ (crucially this includes lines which meet $\dd \R$ only in a vertex of $\R$).
The points of $\P$
 partition $\partial \DD - \P$
into connected open intervals $I$; we denote  the collection of all these intervals by $\I$.

Let $s = s(e)$ be the side of $\R$ whose \emph{exterior} label is $e$.
The extension of $s$ into a complete geodesic lies in $\T$, separating $\DD  $ into two half planes, one of which contains the interior of $\R$ and one of which contains the interior of  $e\R$, see Figure~\ref{fig:labelling}. 
 Let  $L(e)$ denote the open arc on $\partial \DD$
bounding the component which contains $e\R$, see Figure~\ref{fig:sidepartition}. Each interval $I \in \I$ is contained in
$L(e)$ for at least one and  at most two sides of $\dd \R$ (see Lemma~\ref{lem:nomeeting} below for a full justification of this fact).
 For each $I \in \I$, choose $e= e(I) \in \Gamma_0$ such that
$I \subset L(e)$.
We define a map $ f\co \dd \DD - \P \to \dd \DD$ by
$f(x) = e(I)^{-1}(x)$ for $x \in I$.
Extend $f$  to a (discontinuous) possibly
 two valued map on $\P$ in the obvious way.
As observed in~\cite{BoS, STrieste}, the map $f$ is \emph{Markov} in the sense that
for any $J \in \I$, 
$f(I) \cap  J \neq \emptyset$ implies that $f(I) \supset  J $.
To see this, it is clearly
sufficient to show that
$f(\P) \subset \P$, independently of which of the possibly two  choices we make for $f$.
 So suppose $\xi \in \P$ is an endpoint of an interval $I \subset L(e)$
and   that  $f(\xi) = e(I)^{-1}(\xi)$. Write $e = e(I)$. 
 From the definitions,
  $\xi$ is  an endpoint of a geodesic $t$   which meets
the closure of the side $s = s(e)$ of $\R$.
From the definition of the labelling, $e^{-1}(s(e))= s(e^{-1})$ is also a side of $\R$.
Hence  $f(\xi) = e^{-1}(\xi)$ is an endpoint of $e^{-1}(t)$ and  $e^{-1}(t)$ must
meet the closure of $s(e^{-1})$, hence $f(\xi) \in \P$.

 \subsubsection{The alphabet map}
 We define our alphabet by setting  $\A = \I$, in other words, $\A$ is the collection of all the intervals defined by the subdivision of $\dd \DD$ by points in $\P$.
We define a
  transition matrix $P = (p_{I,J} )$
  by $p_{I,J} = 1$ if $f(I) \supset  J$ and $p_{I,J} = 0$ otherwise.
  Let
$\Sigma_F$ denote  the set of finite sequences $I_{i_0}\ldots  I_{i_n}$
with $I_{i_r} \in \I$ such that  $p_{I_{i_{r-1}},I_{i_{r}}}>0$
 for all $r= 0,\ldots, n-1$.
Thus $\Sigma_F$ consists of all allowable finite sequnces
in the subshift  on the symbols $I \in \I$ with transition rule specified by
$P$.

The \emph{alphabet map}  $\pi \co \I \to \Gamma_0$ is the map which associates to each $I \in \I$
the element $e \in \Gamma_0$ corresponding to our choice of $e$ for which $ I \subset L(e)$, equivalently for which $f_{| I} = e^{-1}$.
This extends in an obvious way to a map
$\pi: \Sigma_F \to \Gamma$.
Recall that a product of $n$ elements of  $\Gamma_0$ is \emph{shortest}
(with respect  to the generators $\Gamma_0$ ) if
it cannot be expressed as a product of less than $n$
elements of $\Gamma_0$.
An important feature of  the coding is the
following result which follows from  Theorem 5.10 and Corollary 5.11 in~\cite{STrieste},
see also Theorem 2.8 in~\cite{BiS}.

\begin{thm}
\label{thm:hypothesis} Suppose that $\R$ has even corners and that either
\begin{enumerate}
\renewcommand{\labelenumi}{(\roman{enumi})}
\item $|\dd \R| \ge 5$ or
\item  $|\dd \R| = 4$  and, if in addition all vertices of $\R$ lie
in $ \DD$, then at least three
geodesics in $\T$ meet at each vertex or
\item  $|\dd \R| = 3$  and at least one vertex of $\R$ is on $\dd \DD$.
\end{enumerate}
Then  the alphabet map  $\pi$ is surjective to $\Gamma$. Moreover every word in $\pi (\Sigma_F)$ is shortest with respect to the geometric generators $\Gamma_0$ associated to $\R$ as above, and each element $g \in \Gamma$ has a \emph{unique} representation   in  $\pi (\Sigma_F)$.
\end{thm}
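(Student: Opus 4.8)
The plan is to derive the theorem from the analysis of the Bowen--Series boundary coding carried out in~\cite{STrieste} (Theorem~5.10 and Corollary~5.11), with the supplementary input of~\cite{BiS} (Theorem~2.8); the three clauses (i)--(iii) are exactly the non-degeneracy hypotheses under which that analysis applies, so the substantive task is to match our set-up with theirs and to exclude the degenerate low-complexity configurations. Below I indicate the geometric skeleton underlying those cited results.

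The first ingredient is the fact, recalled in Section~\ref{sec:markov}, that when $\R$ has even corners the crossing sequence $e_1\cdots e_n$ of a hyperbolic geodesic segment from $O$ to $gO$ is a shortest word for $g$ in $(\Gamma,\Gamma_0)$. The second is that the map $f$ is designed to read off precisely these crossing sequences: for $\xi \in \dd\DD \setminus \P$ with $f$-itinerary $(I_0, I_1, \dots)$, one checks --- using the description of the arcs $L(e)$, the control on their overlaps from Lemma~\ref{lem:nomeeting}, and the even-corners assumption --- that the geodesic ray $[O,\xi)$ passes in turn through the regions $\R = g_0\R, g_1\R, \dots$ of $\T$ with $g_k = \pi(I_0 \cdots I_{k-1})$, the one caveat being that when $\xi$ lies in an interval doubly covered by two arcs $L(e)$, $L(e')$ the spelling must be adjusted by a vertex relation of $\R$, which does not change the image in $\Gamma$; this is the point at which the hypotheses on vertices in (ii) and (iii) are used. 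Granting this, surjectivity of $\pi \co \Sigma_F \to \Gamma$ follows: every $g$ is the endpoint $gO$ of some geodesic segment from $O$, generically chosen to avoid vertices of $\T$, which therefore exhibits $g = \pi(I_0\cdots I_{n-1})$, and the Markov property $f(\P) \subseteq \P$ established above forces the block $I_0 \cdots I_{n-1}$ to respect the transition rule, i.e.\ to lie in $\Sigma_F$. Combined with the first ingredient this also shows that every word in $\pi(\Sigma_F)$ is shortest, so that $\pi$ preserves length.

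For uniqueness one uses that $f$ is uniformly expanding, whence the itinerary map from $\dd\DD$ minus a countable set to the space of allowable sequences is injective, together with the exact Markov identity $f([I_0 \cdots I_{n-1}]) = [I_1 \cdots I_{n-1}]$, which shows that on the cylinder $[w]$ of a length-$n$ word $w$ the iterate $f^n$ is the single M\"obius transformation $\pi(w)^{-1}$. One then argues that two words of $\Sigma_F$ with the same image in $\Gamma$ determine the same cylinder, hence coincide. I expect this step to be the main obstacle: it requires a tight hold on how the arcs $L(e)$ can overlap near the vertices of $\R$, and this is exactly where clauses (ii) and (iii) are indispensable --- for a quadrilateral all of whose vertices lie in $\DD$ but through one of which only two geodesics of $\T$ pass, the coding genuinely fails to be injective. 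By comparison, surjectivity and the shortest-word property are relatively soft consequences of the even-corners geometry together with the Markov property $f(\P)\subseteq\P$.
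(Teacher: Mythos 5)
The paper offers no proof of Theorem~\ref{thm:hypothesis}: it is stated as a direct import from Theorem~5.10 and Corollary~5.11 of~\cite{STrieste} and Theorem~2.8 of~\cite{BiS}, so at the citation level your plan coincides with what the paper actually does, and your sketch of surjectivity and of the shortest-word property (crossing sequences of geodesics from $O$, even corners, the Markov property $f(\P)\subset\P$) is consistent with the geometric picture behind those references.

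Your sketch of the uniqueness step, however, contains a genuine error. You propose to show that ``two words of $\Sigma_F$ with the same image in $\Gamma$ determine the same cylinder, hence coincide,'' i.e.\ that $\pi\co\Sigma_F\to\Gamma$ is injective. This is false, and the paper itself says so: the discussion following Theorem~\ref{thm:hypothesis} and Proposition~\ref{prop:almostinjective} exhibit elements $g$ with $\#\pi^{-1}(g)=2$, namely those whose coding ends in a special chain; the two preimage sequences lie in disjoint cylinders. The uniqueness asserted in the theorem is uniqueness of the \emph{word} $e_{i_1}\cdots e_{i_n}$ in $\Gamma_0$, not of the sequence of intervals, and your mechanism cannot deliver even that: the identity $f^n|_{[w]}=\pi(w)^{-1}$ holds equally on two distinct cylinders $[w]$, $[w']$ whenever $\pi(w)=\pi(w')$ as group elements, so it provides no contradiction at exactly the point where one is needed. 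What the uniqueness claim really requires is the combinatorial analysis of when two distinct shortest words represent the same element of $\Gamma$ --- the cycle and consecutive-cycle (special chain) machinery of~\cite{BiS} and~\cite{SInf} underlying Dehn's algorithm, which is also what drives Proposition~\ref{prop:almostinjective} later in the paper --- together with the verification that among the competing shortest spellings the coding selects exactly one. Expansivity of $f$ and injectivity of the itinerary map on $\dd\DD$ are not a substitute for this combinatorial input.
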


In what follows, we always assume that $\R$ satisfies one of the hypotheses
of Theorem~\ref{thm:hypothesis}. Uniqueness means that any $g \in \Gamma$ has a unique representation as  a word $ e_{i_1} \ldots e_{i_n}$  in the image of $\pi$; however we may have two distinct sequences $I_{i_1} \ldots I_{i_n}$ and $I_{j_1} \ldots I_{j_n}$
with $\pi(I_{i_r} )= \pi( I_{j_r})$ for all $r$. A key point   in the proof of Theorem~\ref{generalfuchsian} is to show
 that $\pi$ is nevertheless almost injective, precisely:
\begin{prop}
\label{thm:almostinjective}  Let
$g \in \Gamma$.
 Then  $\pi^{-1}(g) \le 2$ and
$\#\{ g\co |g|=n, \pi^{-1}(g)|>1 \}/n \to 0$ as $ n \to \infty$.
\end{prop}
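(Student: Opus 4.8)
The plan is to control the set of $g$ with two distinct codings by a geometric argument and then show this set is small. First I would analyze when an interval $I \in \I$ lies in $L(e)$ for \emph{two} sides $e, e'$ of $\dd\R$ rather than one; by the remark preceding Lemma~\ref{lem:nomeeting}, this is the only source of ambiguity in the choice $e(I)$, and hence the only source of non-injectivity of $\pi$ at the level of a single symbol. I would show that the two possible values $e(I)^{-1}$ and $e'(I)^{-1}$ of $f$ on such an $I$ send $I$ to the \emph{same} target interval-union (both are Markov and $f(I)$ is determined by the subshift), so that whenever a word $I_{i_1}\dots I_{i_n}$ with $\pi(I_{i_1}\dots I_{i_n}) = g$ admits a ``sibling'' word, that sibling must differ from it in a controlled way: at each position where the two words disagree, the symbols are intervals lying in two common $L(e)$'s. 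The key combinatorial claim, which I would extract from the structure of the coding in~\cite{STrieste} together with the uniqueness in Theorem~\ref{thm:hypothesis}, is that the positions at which a word can be ambiguous are confined to a \emph{bounded initial (or terminal) segment}: once the orbit has ``entered the interior'' of an interval under $f$, subsequent choices are forced. This would give $\pi^{-1}(g) \le 2$, with equality only when $g$ is represented by a word beginning (or ending) with one of finitely many ambiguous symbols.

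For the counting statement, I would proceed as follows. Let $\mathcal{E} \subset \I$ denote the (finite) set of intervals lying in two distinct $L(e)$'s, and let $B_n = \#\{g : |g| = n,\ |\pi^{-1}(g)| > 1\}$. By the first part, every such $g$ is coded by a word one of whose first few (say at most $k_0$, a constant depending only on $\R$) symbols lies in $\mathcal{E}$, or whose coding is otherwise constrained to a subshift of ``lower complexity''. The clean way to make this precise is via generating functions / Perron–Frobenius: the full subshift $\Sigma_F$ has growth rate $\lambda = \lambda_{\max}(P)$, and $K_n = \#S(n) \sim c\lambda^n$ by irreducibility (Proposition~\ref{thm:irred}). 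The ambiguous words, being those forced to pass through $\mathcal{E}$ in a bounded window, form a sub-collection whose counting function is $O(n \mu^n)$ for some $\mu < \lambda$, or at worst $O(\lambda^n / \text{polynomial})$; in either case $B_n / K_n \to 0$, hence $B_n / n \to 0$ as well. Actually the statement as phrased only asks for $B_n/n \to 0$, which is even weaker and follows as soon as $B_n = o(n)$; since $B_n$ is at most the number of words of length $n$ through a proper subshift, $B_n \le C \mu^n$ with $\mu < \lambda$ would give far more, so I would simply prove the subexponential (indeed exponentially small relative to $K_n$) bound and read off what is needed.

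The main obstacle, I expect, is the first part: pinning down \emph{exactly} which symbols can be ambiguous and proving that ambiguity cannot propagate indefinitely along a word, i.e.\ that the sibling of a word must agree with it outside a bounded prefix. This requires a careful reading of the Bowen–Series cylinder structure: one must understand how $f(I)$ overlaps the partition $\I$ when $I \in \mathcal{E}$, and verify that the two branches $e^{-1}, (e')^{-1}$ reconverge after one step in the sense that they determine the same continuation data. Uniqueness in Theorem~\ref{thm:hypothesis} is the crucial lever here — it forbids two genuinely different words from representing the same $g$, so any two codings of $g$ can only be ``the same word read through different symbols,'' and the task is to bound how long such re-readings can persist. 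Once that structural fact is in hand, both $\pi^{-1}(g) \le 2$ and the density-zero estimate are comparatively routine: the former is immediate, and the latter is a standard subshift counting argument using the spectral gap of the irreducible matrix $P$.
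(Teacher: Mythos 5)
Your proposal rests on a misidentification of where the non-injectivity of $\pi$ comes from, and this leads to a structural claim that is false. You locate the ambiguity in the intervals $I$ lying in two sets $L(e)\cap L(e')$, i.e.\ in the \emph{choice} of the label $e(I)$. But that choice is made once and for all when $f$ and $\pi$ are defined; after that, $\pi$ is a genuine function on $\Sigma_F$. Its failure to be injective comes from the opposite phenomenon: the alphabet map $\I\to\Gamma_0$ is many-to-one (every $L(e)$ contains many intervals of $\I$ carrying the label $e$), so two \emph{distinct} admissible interval sequences $I_{i_0}\ldots I_{i_n}$ and $I_{j_0}\ldots I_{j_n}$ can spell the same generator word $e_{i_0}\ldots e_{i_n}$. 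Your ``key combinatorial claim'' --- that the positions of disagreement are confined to a bounded initial or terminal window, after which the choices are forced and the two branches reconverge --- is the reverse of what actually happens. Lemma~\ref{lem:nevernever} shows that once two codings of the same $g$ disagree at some position $k$, they disagree at \emph{every} subsequent position: the branches never reconverge, and the ambiguous segment is a terminal segment of \emph{unbounded} length. With only a bounded-window claim you also cannot get $\pi^{-1}(g)\le 2$ rather than some larger constant; the bound $2$ comes from the fact that, once the two codings diverge, each is forced step by step along an essentially unique path.

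What rescues the situation, and what your argument is missing, is the rigidity of that unbounded ambiguous suffix: Proposition~\ref{prop:almostinjective} shows it must be a \emph{special chain}, i.e.\ a sequence of consecutive vertex cycles of prescribed lengths $n(v_i)-1$, and such a chain is completely determined by its first two letters and the length of its initial cycle. Hence there are at most a constant $K=K(\R)$ special chains of each length, which is what makes the exceptional set negligible; the counting is purely combinatorial and does not need (though it is compatible with) a Perron--Frobenius spectral-gap estimate of the kind you sketch. Your subshift-counting step is not wrong in spirit, but it is predicated on the exceptional words being ``forced through a finite set $\mathcal E$ in a bounded window,'' which is not the correct description of the exceptional set; without the special-chain analysis there is no proper subshift to count. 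To repair the proof you would need to replace your bounded-window claim with the cycle/consecutive-cycle machinery (or an equivalent analysis of how $f$ acts on the crowns $C(v)$ level by level, as in Lemma~\ref{lem:3steps}).
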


\subsubsection{Ubiquity of even corners} \label{sec:ubiquity}
The condition that $\R$ have even corners may seem very special. However our result
depends only on the combinatorics of the generating set, so  that regions which do not have even corners may still have side pairings which satisfy the required conditions.
More precisely, we note the following facts:
\begin{enumerate}
\item Many standard fundamental domains, for example the symmetrical $4g$-gon
for a surface of genus $g$, and the standard fundamental domain for $SL(2,\ZZ)$,
do have even corners. Moreover the condition depends only on the geometry of $\R$ and not on the pattern of side pairings, so that for example the symmetrical $4g$-gon with opposite sides paired would work equally well.
\item A simple observation going back to Koebe  shows that
the group corresponding to
any closed hyperbolic surface has a fundamental domain with even corners.
To see this we have only to choose smooth closed geodesics for the sides
of $\R$. This is always possible; see~\cite{BiS} for a picture.
\item If  $\Gamma$  has no torsion but contains parabolics
then one can always choose a fundamental polygon
with all vertices on $\dd \DD$. Such a polygon certainly has even corners (and in fact $\Gamma$ is then a free group).
\item We showed in~\cite{BoS} that every finitely generated Fuchsian group
is quasiconformally equivalent to one which has a fundamental domain with even corners.
The deformation can be chosen to preserve  the combinatorial pattern of sides and
side pairings of $\R$.
Since our   results only depend on the group and not on the specific
hyperbolic structure, it is sufficient to work with the deformed group
for which the fundamental domain does have the even corner property.
\item We show in~\cite{SInf} that, subject to hypotheses essentially
the same as those of Theorem~\ref{thm:hypothesis}, one can always find
a partition of $\dd \DD$ and a map $f$ whose combinatorial properties
are identical to those which pertain when $\R$ has even corners.
One could work directly in this setting, but with the disadvantage
that without the geometry of $\R$
at ones disposal it would be much harder to follow what is going on.
\item Despite the above comments, one should be clear that our results
depend heavily on the choice of $\R$ and the geometrical generating set
$\Gamma_0$.
\end{enumerate}

 \begin{remark}
  {\rm Let $\Sigma$ denote the space of all infinite sequences $I_{i_0} I_{i_1} \ldots$
allowed by the transition matrix $P$. Let $\Lambda(\Gamma)$  denote the limit set of
$\Gamma$.
We showed in~\cite{SInf} that, modulo the   exceptional cases
 excluded by Theorem~\ref{thm:hypothesis},
the obvious map defined by ``$f$-expansions'' induces
  a  surjection  $\pi(\Sigma) \to \Lambda(\Gamma)$
which is injective except on a countable number of points
where it is two-to-one. (The exceptional points are essentially the endpoints
of infinite special chains, see~\cite{SInf} Proposition 4.6 and
Section~\ref{sec:alphabet} below.) 

If $\Gamma$ contains no parabolics, then we show in~\cite{SInf}
that   Hausdorff measure in dimension $\delta$, where $\delta$
is the exponent of convergence of $\Gamma$, lifts to a Gibbs measure
on  $\Sigma$.
In this case, Hausdorff measure is the so-called Patterson
 measure on $\Lambda(\Gamma)$. In particular, if $\DD/ \Gamma$ is compact then
Lebesgue measure on $\dd \DD$ is   Gibbs.  
In~\cite{SMartin} we studied random walks on the Cayley graph of $\Gamma$.
We showed
that if the transition probabilities are finitely supported
on $\Gamma$, then   hitting  distribution on $\Lambda(\Gamma)$ is Gibbs. Note however that the obvious actions of $\Gamma$ on these spaces are not measure preserving, so our ergodic theorem does not apply in this context.}
 \end{remark}

\subsection{Irreducibility}

In this section we show that the
transition matrix $P$ associated to the map $f$ is irreducible, which unfortunately means delving in some detail into its mechanics.
The starting point is
the following simple but crucial result which is Lemma 2.2 in~\cite{BoS}.

 \begin{lemma}
 \label{lem:nomeeting} Suppose that
 $|\dd \R| > 3$. Then any two distinct complete geodesics in $ \T$
 which meet $\dd \R$
 are either disjoint in $  \DD \cup \dd \DD$, or intersect in a vertex of $\R$.
 \end{lemma}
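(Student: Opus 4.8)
The plan is to argue entirely inside the hyperbolic disk, using only the even-corners hypothesis together with the convexity of $\R$.

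\emph{Step 1: a normal form for geodesics of $\T$ hitting $\overline{\R}$.} I would first check that if $\ell$ is a complete geodesic contained in $\T$ with $\ell\cap\overline{\R}\neq\emptyset$, then $\ell\cap\overline{\R}$ is either a single side of $\R$ or a single vertex of $\R$, and moreover $\overline{\R}$ lies in one of the two closed half-planes bounded by $\ell$. Since (in the abuse of notation of Section~\ref{sec:coding}) $\T$ is the union of the tile boundaries, it misses every tile interior, so $\ell\cap\R^{\circ}=\emptyset$ and hence $\ell\cap\overline{\R}=\ell\cap\dd\R$. As the intersection of a geodesic with a convex set this is connected, so it is a point or a subsegment lying in $\dd\R$. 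Because the interior angle of $\R$ at each vertex is $<\pi$, a geodesic subsegment of $\dd\R$ with more than one point cannot turn at a vertex and so lies in a single closed side $s$; and as soon as it meets the relative interior of $s$, only the two tiles $\R$ and its neighbour across $s$ meet there, so the complete extension $\hat s$ of $s$ (which belongs to $\T$ precisely because $\R$ has even corners) is the only complete geodesic of $\T$ through such a point. Thus $\ell=\hat s$, and then $\ell\cap\overline{\R}=s$ by convexity. In the remaining case $\ell\cap\overline{\R}$ is a single point, which must be a vertex of $\R$: otherwise it is interior to a side and the argument just given again forces $\ell=\hat s$ and $\ell\cap\overline{\R}=s$, a contradiction. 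The half-plane claim is clear since $\R^{\circ}$ is connected and avoids $\ell$.

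\emph{Step 2: reduction.} Now let $\ell_1\neq\ell_2$ be complete geodesics of $\T$, each meeting $\dd\R$, and suppose they are not disjoint in $\DD\cup\dd\DD$; since two distinct complete geodesics meet in at most one point of $\DD\cup\dd\DD$, they meet in a single point $p$, and it suffices to prove $p$ is a vertex of $\R$. If $p\in\overline{\R}$ then $p\in(\ell_1\cap\overline{\R})\cap(\ell_2\cap\overline{\R})$, and by Step~1 each factor is a side or a vertex; were $p$ interior to a side $s$, Step~1 would give $\ell_1=\hat s=\ell_2$, contrary to $\ell_1\neq\ell_2$, so $p$ is a vertex of $\R$. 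Everything therefore comes down to excluding the case $p\notin\overline{\R}$.

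\emph{Step 3: $p\notin\overline{\R}$ is impossible when $|\dd\R|>3$, and this is the crux.} Suppose $p\notin\overline{\R}$. Since $\R^{\circ}$ avoids $\ell_1$ and $\ell_2$, the set $\overline{\R}$ lies in the closed convex geodesic sector $Q$ at $p$ bounded by the two rays $r_i\subset\ell_i$ that issue from $p$ into the half-planes containing $\R$; the angle of $Q$ at $p$ is $<\pi$, and by Step~1 the set $c_i:=\ell_i\cap\overline{\R}$ (a side or a vertex of $\R$) lies on $r_i$ at positive distance from $p$. Writing $a_i$ for the endpoint of $c_i$ nearest $p$, which is a vertex of $\R$, we have $[p,a_i]\subset\ell_i\subset\T$. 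The argument then divides according to whether each $\ell_i$ contains a side of $\R$ (necessarily one of the two sides at $a_i$, and then $c_i$ is that side) or meets $\overline{\R}$ only at the vertex $a_i$. In each configuration one examines the component $N$ of $Q\setminus\overline{\R}$ whose closure contains $p$: its boundary is built from subsegments of $\ell_1$ and $\ell_2$ and an arc of $\dd\R$ running from $a_1$ to $a_2$, so $\dd N\subset\T$ and $N$ is tiled by copies of $\R$; tracking the cyclic position of $a_1,a_2$ on $\dd\R$ together with the vertex angles (all $<\pi$) of $\R$ and of the tiles of $N$ incident to $a_1$ and $a_2$ should force $\R$ to have at most three sides, which is the contradiction. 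This last step is exactly where the hypothesis $|\dd\R|>3$ is indispensable: for $\R$ an ideal triangle the tessellation $\T$ is a tessellation by ideal triangles, and if $\R'$ is the tile of $\T$ across one side of $\R$ then the other two sides of $\R'$ are complete geodesics of $\T$ meeting $\dd\R$ in the two endpoints of that side, yet they intersect each other at the third vertex of $\R'$, which is \emph{not} a vertex of $\R$ -- precisely the behaviour the lemma forbids when $|\dd\R|>3$.

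In summary, Steps~1 and~2 are routine hyperbolic geometry; the genuine obstacle is to carry out Step~3, i.e.\ to verify in each of the (essentially two) sub-configurations that the local combinatorics of the tiling around the vertices $a_1$ and $a_2$, combined with $|\dd\R|>3$, cannot coexist with $p\notin\overline{\R}$.
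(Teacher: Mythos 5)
Your Steps 1 and 2 are correct but routine: a complete geodesic $\ell\subset\T$ avoids every tile interior, so $\ell\cap\overline{\R}$ is a connected subset of $\dd\R$ (plus possibly ideal points), and the even-corners hypothesis together with the vertex angles being $<\pi$ forces it to be a whole side or a single vertex; the reduction to excluding an intersection point $p\notin\overline{\R}$ then follows. The problem is that Step 3, which you yourself identify as ``the crux'', is not a proof but a plan: you set up the sector $Q$ at $p$, the vertices $a_1,a_2$ where $\ell_1,\ell_2$ first touch $\overline{\R}$, and the component $N$ of $Q\setminus\overline{\R}$ whose closure contains $p$, and then assert that tracking the combinatorics ``should force $\R$ to have at most three sides.'' That assertion is exactly the content of the lemma, and nothing in your write-up derives it. The missing argument is a Gauss--Bonnet/Euler-characteristic count on $N$: $N$ is tiled by copies of $\R$, each with at least four sides and all interior angles $<\pi$; its boundary consists of two geodesic segments (in $\ell_1$ and $\ell_2$) and one arc of $\dd\R$, so it has only three ``corners'' that are convex towards $N$ (at $p$, $a_1$, $a_2$); summing angles over the tiles and vertices of the induced cell decomposition of $N$ and comparing with the number of boundary edges yields a contradiction unless some tile has $\le 3$ sides. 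Until that count (or an equivalent induction on the number of tiles in $N$) is carried out, including the degenerate case $p\in\dd\DD$, the proof is incomplete.

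For comparison: the paper does not prove this statement at all --- it imports it verbatim as Lemma 2.2 of Bowen--Series \cite{BoS}, where the proof is precisely the area/angle-counting argument on the region you call $N$. So your Steps 1--2 reconstruct the standard reduction, and your counterexample for the ideal triangle correctly shows the hypothesis $|\dd\R|>3$ is sharp, but the substantive step that makes the lemma true is still owed.
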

The most interesting case is when lines  $t, t' \subset \T$ meet  $\dd \R$  in
the two vertices at the opposite ends  of some side $s$,  neither being coincident with $s$. The lemma asserts that
$t$ and $t'$  are disjoint. Note also that there is a choice in the definition  of $f$ only if $L(e) \cap L(d) \neq \emptyset$. It follows from Lemma~\ref{lem:nomeeting} that this occurs only if $s(e)$ and $s(d)$ are adjacent sides of $\R$.

\begin{figure}[hbt]
\centering 
\includegraphics[height=6cm]{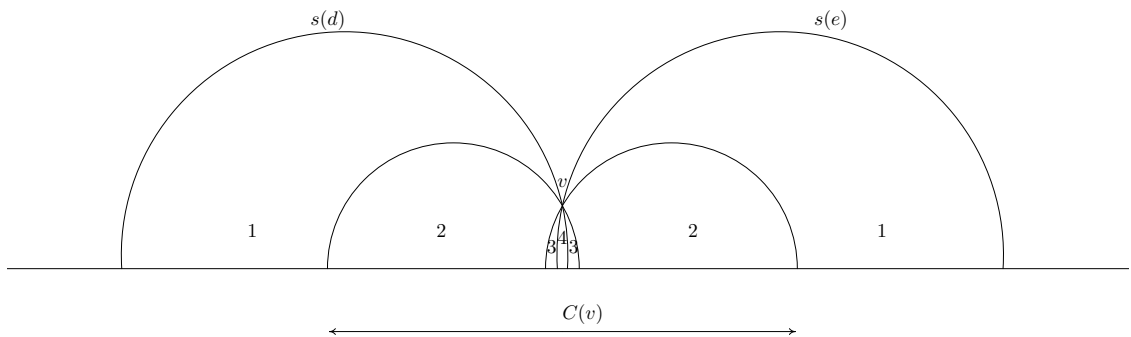} 
\caption{Intervals round a vertex $v$ of $\R$. In the case shown, $n(v) = 4$, numbers indicating the levels. All intervals except those of level $0$ and $1$ are contained in $\I$. The crown $C(v)$ is the union of all intervals of levels $2, \ldots, n(v)$.}
 \label{fig:levels}
\end{figure} 

 Now we establish some  notation.
 Let $v$ be a vertex of $\R$.
 Let $n(v)$ be the  number of geodesics of $\T$ which meet at $v$ (so that
 $2n(v)$ copies of $\R$ meet at $v$).
 The endpoints on $\dd \DD$ of the $n(v)$ complete geodesics in $\T$ which meet at $v$
 partition their complement in $\dd \DD$ into $2n(v)$ open intervals,
each of which is (the interior of the closure of) a union of intervals
$J \in \I$. We assign to each of
  these new intervals a level, denoted $\lev(.)$,  as follows.
 The interval $L(d) \cap L(e)$
  bounded by the
  endpoints of the extensions of  sides    $s(d)$ and $s(e)$ of $\R$ which meet  at $v$ has level  $n(v)$.
   The interval opposite $L(d) \cap L(e)$
   has level $0$.
  The remaining intervals going round in both directions
  from $0$ to $n(v)$ have in order levels $1,2 , \ldots, n(v)-1$, see Figure~\ref{fig:levels}.
 It follows from Lemma~\ref{lem:nomeeting} that the intervals of all levels except $0$ and
   $1$ are also intervals in the set $\I$, and moreover that 
  there is a choice in the definition  of $f$ only on intervals of level $n(v)$.
  Finally define the \emph{crown} of $v$, denoted
 $C(v) $,  to be the interior of the
closure of the union of the  intervals of levels $n(v), n(v) -1, \ldots, n(v) -2$ at $v$,
see Figure~\ref{fig:levels}.
 Thus $C(v) $ is an open interval on $\dd \DD$.
 Notice that if $v \in \dd \DD$ then $C(v) = \emptyset$.

\begin{figure}[hbt]
\centering 
\includegraphics[height=5cm]{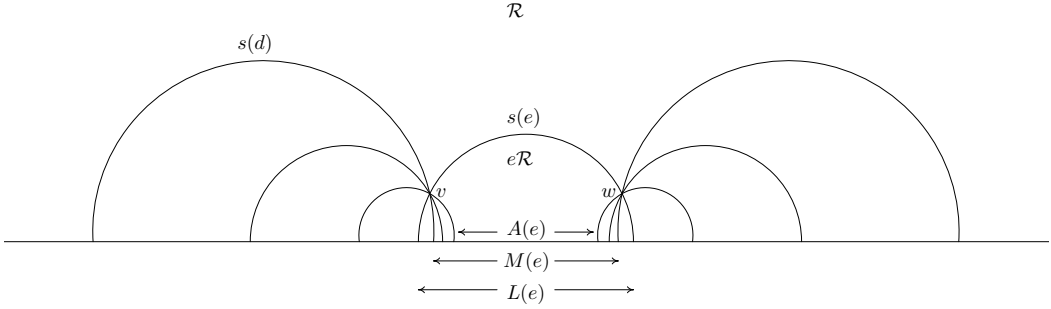} 
\caption{Intervals subtended by a side of $\R$. If the vertices of $s(e)$ are $v,w$ then $A(e)$ is the subset of $L(e)$ which is contained in neither $C(v)$ nor $C(w)$, while $M(e)$ consists of all of $L(e)$ except the `ambiguous' top level intervals at either end of $L(e)$. Thus $f_{|M(e)} = e^{-1}$ while $f_{|L(e) \cap L(d)}$ can have value either $e^{-1}$ or $d^{-1}$. }
 \label{fig:sidepartition}
\end{figure} 

 For each $e \in \Gamma_0$  define $\dd e $ to be the two vertices
of $\R$  at the ends of the side  $s(e)$, and let
 $C (e) = C(v) \cup C(w)$ where $\dd e = \{v,w\}$.
Also let  $M(e)= L(e) - \overline {\cup_{ d \neq e} L(d)} $
 and  $A(e)  = L(e) - \overline {C (\dd e)}$, see Figure~\ref{fig:sidepartition}.
If $|\dd \R| >3$ then Lemma~\ref{lem:nomeeting} implies that
 $A(e) \neq \emptyset$.
Note that if  $x \in M(e)$ then $f(x) = e^{-1}(x)$.

We are finally ready to start our proof that the transition matrix $P$ is irreducible.
In what follows we shall say that a
constant  depends only on $\R$,
when we mean that it depends on $\R$ and the
combinatorial pattern of side pairings of $\R$.
We introduce various such constants and denote all of them by $K$.

\begin{lemma}
 \label{lem:BtoA}
Suppose $| \dd \R| >3$. Then there exists $K \in \mathbb N$, depending only on $\R$,
such that for any  $I \in \I$, we have
  $f^r(I) \supset A(e)$ for some $r$ with $0 \le r \le K$
 and some $e \in \Gamma_0$.
 \end{lemma}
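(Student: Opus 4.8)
The plan is to use the Markov property of $f$ together with the fact that $f$ is, on each interval $I \in \I$, an element of $\Gamma_0$ carrying endpoints of $\T$-geodesics to endpoints of $\T$-geodesics, so that all the combinatorial data (intervals, levels, crowns, the sets $A(e)$) are permuted by $f$ in a controlled way. The key structural input is the Markov condition: whenever $f(I) \cap J \neq \emptyset$ for $J \in \I$ we already know $f(I) \supset J$. Hence once $f^r(I)$ \emph{meets} some $A(e)$ it automatically \emph{contains} a whole collection of intervals of $\I$, and the only obstruction is that $f^r(I)$ might fail to be large enough to cover all of $A(e)$, which is itself a finite union of such intervals. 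So the real content is an expansion estimate: the images $f^r(I)$ cannot keep shrinking, and in a bounded number of steps they must grow to engulf some $A(e)$.

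First I would record that $\I$ is finite (the polygon $\R$ has finitely many sides, hence finitely many geodesics in $\T$ meeting $\dd \R$, hence finitely many points in $\P$), so there are only finitely many intervals and finitely many sets $A(e)$, each a finite union of members of $\I$; thus it suffices to produce some uniform bound. Next, I would show that $f$ is \emph{expanding on the level of symbols}: for any $I \in \I$ the image $f(I)$ is a union of at least one interval of $\I$, and, crucially, iterating cannot stabilize at a small set forever. Concretely, consider the sequence of cardinalities $N_r = \#\{J \in \I : J \subset f^r(I)\}$. Using irreducibility-type mechanics of $f$ — specifically, that from any interval one can, after finitely many applications of $f$, reach an interval that straddles a vertex $v$ of $\R$ with $n(v) \geq 2$, at which point $f$ (being the generator $e^{-1}$ opposite a top-level interval) opens the image up across the geodesics at that vertex — one sees that $N_r$ is eventually forced to increase until $f^r(I)$ contains a full $L(e) \cap L(d)$ or more. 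I would make this precise by tracking the level function: if $f^r(I)$ contains an interval of level $\ell$ at some vertex, then applying the appropriate generator either raises the level or moves to an adjacent vertex, and after at most $K$ steps (with $K$ bounded in terms of $\max_v n(v)$ and $\#\I$) one reaches level $\geq n(v) - 2$ at some vertex, i.e.\ one covers a piece of $C(\dd e)$; combined with a parallel argument on the complementary side one covers all of $L(e)$ outside the ambiguous ends, which contains $A(e)$ by definition (Figure~\ref{fig:sidepartition}).

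The cleanest route, which I would actually write, is a pigeonhole argument avoiding delicate level bookkeeping: since $\I$ is finite, consider the finite directed graph on vertex set $\I$ with an edge $I \to J$ whenever $p_{I,J} = 1$; irreducibility of $P$ (proved later, but here one only needs the weaker statement that from every $I$ one can reach, within a bounded number of steps, an interval whose image under the next application of $f$ is "large") shows that within $\#\I$ steps the image $f^r(I)$ contains some interval $I'$ that is a top-level interval $L(e) \cap L(d)$ at a vertex $v$ with $n(v) \geq 2$ — such an interval exists because $|\dd \R| > 3$ forces the configuration of Lemma~\ref{lem:nomeeting}, so some vertex has at least two geodesics through it and the corresponding $A(e) \neq \emptyset$. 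From there a further bounded number of applications of $f$, chosen to sweep out the crown $C(v)$ and the neighbouring intervals, yields $f^{r'}(I) \supset A(e)$; set $K$ to be the sum of the two bounds. The main obstacle I anticipate is precisely controlling how the image grows around a vertex — ensuring that the choices of branch of $f$ (recall $f$ is possibly two-valued on $\P$, and ambiguous on top-level intervals) do not prevent the image from expanding — and, relatedly, handling the degenerate possibility that a vertex lies on $\dd \DD$ so that $C(v) = \emptyset$; in that case one must instead use a genuinely interior vertex, whose existence again follows from $|\dd \R| > 3$ and Lemma~\ref{lem:nomeeting}. I would isolate this local "a bounded number of $f$-steps fills a crown" statement as the crux and prove it by direct inspection of Figure~\ref{fig:levels}, then assemble the global bound by finiteness of $\I$.
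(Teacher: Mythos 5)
Your proposal has the right general shape --- a bounded number of iterations controlled by a level function, with the bound depending only on $\R$ --- but the mechanism you describe runs in the wrong direction, and your fallback argument is circular. The paper's proof is a one-step monotone descent: if $I$ is not already some $A(e)$, then it lies in the crown of a vertex $v$ at level $r\ge 2$ inside some $L(e)$ with $f_{|I}=e^{-1}$; since $e^{-1}$ carries the configuration of $\T$-geodesics at $v$ to the identical configuration at $e^{-1}v$, the image $f(I)$ is precisely the interval of level $r-1$ at $e^{-1}v$. So the level \emph{decreases} by exactly one at each application of $f$, and after at most $\max_v n(v)-1$ steps the image is the level-$1$ region at some vertex, which contains $A(d)$ for the appropriate $d$. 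You instead assert that applying $f$ ``either raises the level or moves to an adjacent vertex'' and aim to reach level $\ge n(v)-2$, i.e.\ to cover part of a crown $C(\dd e)$; but $A(e)$ is by definition $L(e)-\overline{C(\dd e)}$, the part of $L(e)$ \emph{outside} the crowns, so covering crown intervals is not progress toward covering $A(e)$. The correct target is the level-$1$ set, and the relevant monotone quantity goes down, not up.

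Two further concrete problems. First, your proposed expansion estimate via $N_r=\#\{J\in\I: J\subset f^r(I)\}$ cannot drive the argument: for $I$ in a crown at level $r\ge 3$, $f(I)$ is again a \emph{single} interval of $\I$ (of level $r-1$), so $N_r$ remains equal to $1$ throughout the descent and only jumps at the final step; nothing forces intermediate growth, and no pigeonhole on cardinalities is available. Second, your ``cleanest route'' invokes irreducibility of $P$, or a ``weaker statement'' that from every $I$ one reaches, in boundedly many steps, an interval with large image --- but this lemma is itself an ingredient in the paper's proof of irreducibility (Proposition~\ref{thm:irred}), and the weaker statement you would need is essentially the lemma being proved, so that route is circular unless you supply an independent argument. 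The repair is to verify the single local fact that $f$ sends a level-$r$ crown interval at $v$ to the level-$(r-1)$ interval at $f(v)=e^{-1}v$ (using the $\Gamma$-invariance of $\T$ and of the side labelling) and then iterate; the degenerate case $v\in\dd\DD$ you worry about never arises here, since then $C(v)=\emptyset$ and there is no crown interval to descend from.
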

 \begin{proof}  If $I$ is not already of the form $A(e)$ for some $e \in \Gamma_0$,
 then it is in the crown of some vertex $v$ of $\R$, and hence  $\lev(I) = r >1$.
  Suppose that $ I \subset L(e)$ and that $f_{|I} = e^{-1}$. Then $f$ carries  $s(e)$ 
 to the side $e^{-1}(s) = s(e^{-1})$, so that $f(v) = e^{-1} v$ is a vertex
 of $s(e^{-1})$. Following the discussion at the start of  Section~\ref{sec:markov}, the cyclic order of labels around the vertices $v$ and  $f(v)$
 is the same, and 
by inspection one sees that
$f(I)$ is an interval of level $r-1$ at $e^{-1}v$.
 Since the sets $A(e)$ are contained in the union of
 level $1$ sets associated to all vertices of $\R$, the result follows.
 \end{proof}

\begin{lemma}
 \label{lem:MtoAC}
  Let $e \in \Gamma_0$. Then
\begin{enumerate}
\renewcommand{\labelenumi}{(\roman{enumi})}
\item $f(M(e)) \supset C(v)$ for any vertex $v \notin \dd e^{-1}$.
\item $f(M(e)) \supset A(d)$ for any  $ d \neq   e^{-1}$.
\end{enumerate}
 \end{lemma}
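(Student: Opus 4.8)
The plan is to compute $f(M(e))$ exactly and then read both containments off the level picture around the vertices of $\R$, using Lemma~\ref{lem:nomeeting}. Since $f$ agrees with $e^{-1}$ on $M(e)$, we have $f(M(e)) = e^{-1}(M(e))$, so everything is about the image of $M(e)$ under the isometry $e^{-1}$. Writing $\tilde s(e)$ for the complete geodesic extending the side $s(e)$, it splits $\DD$ into a half-plane containing $\R$ and a half-plane, bounded on $\dd\DD$ by $L(e)$, containing $e\R$; since $e^{-1}(s(e)) = s(e^{-1})$ and $e^{-1}(e\R) = \R$, the isometry $e^{-1}$ carries the latter half-plane onto the half-plane of $\tilde s(e^{-1})$ containing $\R$, whence $e^{-1}(L(e)) = \dd\DD \setminus \overline{L(e^{-1})}$, the large arc lying behind $s(e^{-1})$ as seen from $\R$.

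Next I would locate $M(e)$ inside $L(e)$. By the remark following Lemma~\ref{lem:nomeeting}, $L(e)$ meets another $L(d)$ only when $s(d)$ is adjacent to $s(e)$, and then $L(e) \cap L(d)$ is exactly the top (level $n(v)$) interval at the common vertex $v$; thus $M(e) = L(e) \setminus \overline{\cup_{d \ne e} L(d)}$ is $L(e)$ with these two top-level intervals removed, one at each endpoint $v_i$ of $s(e)$. Since $e^{-1}$ is an isometry preserving $\T$ and $w_i := e^{-1}(v_i) \in \dd e^{-1}$, it maps the partition of $\dd\DD$ by the geodesics through $v_i$ onto that by the geodesics through $w_i$, sending the removed interval — which abuts $\tilde s(e)$ on the $e\R$-side — to the unique interval $B_i$ of the partition at $w_i$ abutting $\tilde s(e^{-1})$ on the $\R$-side; since $s(e^{-1})$ is a side of $\R$ at $w_i$, this interval has level $n(w_i) - 1$, and in particular $B_i \subset C(w_i)$. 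Hence $f(M(e)) = \dd\DD \setminus N$, where $N := \overline{L(e^{-1}) \cup B_1 \cup B_2}$ is a single closed arc and $N \subset \overline{L(e^{-1}) \cup C(\dd e^{-1})}$.

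It remains to show that the intervals in (i) and (ii) are disjoint from $N$. For (ii), let $d \ne e^{-1}$. If $s(d)$ is not adjacent to $s(e^{-1})$ then $L(d) \cap L(e^{-1}) = \emptyset$ by the remark after Lemma~\ref{lem:nomeeting}, and a further application of that lemma shows $L(d)$ also misses $B_1$ and $B_2$ (which sit at the vertices of $s(e^{-1})$), so $A(d) \subset L(d)$ misses $N$. If $s(d)$ is adjacent to $s(e^{-1})$ at a vertex $w$, then $L(d) \cap \overline{L(e^{-1})}$ is the top-level interval at $w$, which together with whichever of $B_1, B_2$ sits at $w$ lies in $\overline{C(w)} \subset \overline{C(\dd d)}$, while the other $B_i$, at the far vertex of $s(e^{-1})$, is disjoint from $L(d)$ by one more use of Lemma~\ref{lem:nomeeting}; since $A(d) = L(d) \setminus \overline{C(\dd d)}$ we again get $A(d) \cap N = \emptyset$. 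For (i), fix $v \notin \dd e^{-1}$, so that $v$ lies strictly on the $\R$-side of $\tilde s(e^{-1})$. By Lemma~\ref{lem:nomeeting}, a geodesic $t \in \T$ through $v$ is either disjoint from $\tilde s(e^{-1})$ in $\DD \cup \dd\DD$ — and then, containing $v$, lies wholly on the $\R$-side — or meets $\tilde s(e^{-1})$ at a vertex of $\R$, necessarily a vertex of $s(e^{-1})$; in the latter case $t$ passes through two vertices of $\R$, so $t \cap \overline\R$ is a segment of $\dd\R$ joining two vertices, hence a side of $\R$, forcing $t$ to be the extension of the side of $\R$ at that vertex adjacent to $s(e^{-1})$. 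So the only geodesics through $v$ that can meet $\tilde s(e^{-1})$ are those of the two sides of $\R$ at $v$ adjacent to $s(e^{-1})$, if any; all other geodesics through $v$ stay on the $\R$-side of $\tilde s(e^{-1})$. Feeding this into the level picture at $v$ shows that $C(v)$ lies on the $\R$-side of $\tilde s(e^{-1})$ and avoids $B_1$ and $B_2$, i.e. $C(v) \cap N = \emptyset$.

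I expect the one genuinely delicate step to be this last verification in the exceptional case, where a side of $\R$ at $v$ is adjacent to $s(e^{-1})$: one must check that the crown $C(v)$, which straddles both sides of $\R$ meeting at $v$, does not spill past the corresponding endpoint of $\tilde s(e^{-1})$ into $N$. This is exactly the configuration of two geodesics meeting $\dd\R$ in the two vertices at opposite ends of a single side that is singled out right after Lemma~\ref{lem:nomeeting}, and it is there that that lemma does its real work.
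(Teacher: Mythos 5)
Your computation is essentially the paper's own: identify $f(M(e))=e^{-1}(M(e))$ as the complement of $\overline{L(e^{-1})\cup B_1\cup B_2}$, where the $B_i$ are the level $n(w_i)-1$ intervals at the vertices $w_i$ of $s(e^{-1})$ just outside $L(e^{-1})$, and then check that the sets in (i) and (ii) miss this complement. In fact you give more detail than the paper, which stops at the identification of $e^{-1}(M(e))$ and leaves the disjointness checks implicit. The one step you flag but do not carry out --- that $C(v)$, for $v\notin \dd e^{-1}$, does not spill past an endpoint of the extension of $s(e^{-1})$ when a side $s(x)$ at $v$ is adjacent to $s(e^{-1})$ at a vertex $w$ --- closes as follows. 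The set $L(x)\cap L(e^{-1})$ is the top-level interval at $w$, a single interval of $\I$ abutting the endpoint of the extension of $s(x)$ far from $v$, and $B_i$ is the next interval of $\I$ continuing from it into $L(x)$; since intervals of $\I$ contain no point of $\P$, neither of these can reach past the endpoint of the next geodesic of $\T$ through $v$ (a point of $\P$), so both lie in the closed level-$1$ interval of $L(x)$ at $v$, which is disjoint from the open set $C(v)$. (That this picture is consistent --- in particular that the top-level intervals at the two ends of $L(x)$ sit at opposite ends and that the level-$1$ intervals at $v$ and $w$ overlap --- is exactly the opposite-ends-of-a-side case of Lemma~\ref{lem:nomeeting}, as you anticipated; one should also note that the containment $B_i\subset C(w_i)$, used by you and tacitly by the paper, needs $n(w_i)\ge 3$, which is where the extra hypothesis for quadrilaterals comes from.)
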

 \begin{proof}
By definition,  $f_{| M(e)} = e^{-1}$  so that $f$ carries $s(e)$ to $s(e^{-1})$. Moreover 
$e^{-1}$ carries
$L(e)$  to the complement in $\dd \DD$ of $L(e^{-1})$. We need to find the image under 
$e^{-1}$ of 
$M(e) \subset L(e)$.
 Let $V$,$W$ denote the endpoints of $L(e^{-1})$ on $\dd \DD$ and let $V_1,W_1$ denote the  points in $\P$  adjacent
to $V$,$W$ and outside $L(e^{-1})$.
Then $e^{-1}(M(e))$ is the  interval on $\dd \DD$ bounded by $V_1,W_1$
and not containing $L(e^{-1})$.
This covers all of $\dd \DD$ except for $A(e^{-1})$ and parts of $C(\dd e^{-1})$.
\end{proof}

\begin{lemma}
 \label{lem:AtoM}
 Suppose that $|\dd \R| >3$. Then there exists $K \in \mathbb N$, depending only on $\R$,
   such that  for any $e \in \Gamma_0$, and any $I \in \I$ which is contained in
 $ M(e)$, we have
    $f^r(A(e)) \supset I$
   for some $r \le K$.
 \end{lemma}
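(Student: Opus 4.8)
The plan is to exploit the geometric description of the intervals $A(e)$ and $M(e)$ together with the Markov property of $f$, and to show that starting from $A(e)$ one can reach an arbitrary subinterval $I \subset M(e)$ in a bounded number of steps. First I would recall, from Lemma~\ref{lem:MtoAC} (or rather its proof), the precise location of $A(e)$ inside $L(e)$: it consists of all of $L(e)$ except the crowns $C(v), C(w)$ at the two endpoints $v,w$ of the side $s(e)$; in particular $A(e)$ already contains all the intervals $J \in \I$ of level $1$ at $v$ and $w$ lying inside $L(e)$, as well as every interval of $\I$ lying in the ``middle part'' of $L(e)$. The key observation is that $f$ restricted to $A(e)$ need not be a single generator, since $A(e)$ can meet several $L(d)$ for $d$ adjacent to $e$; however, on the level-$1$ intervals at $v$ and $w$ the map $f$ is single-valued.

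The main step is to analyse how $f$ acts on level-$1$ intervals at a vertex. If $J \subset L(d) \cap A(e)$ is of level $1$ at the vertex $v$, then $f_{|J} = d^{-1}$ carries $s(d)$ to $s(d^{-1})$ and sends $v$ to the vertex $d^{-1}v$; by the discussion at the start of Section~\ref{sec:markov} the cyclic order of labels around $d^{-1}v$ agrees with that around $v$, and as in the proof of Lemma~\ref{lem:BtoA}, $f$ raises the level by exactly $1$: the image $f(J)$ is an interval of level $2$ at $d^{-1}v$. Iterating, after $k$ steps we reach an interval of level $k+1$ at some vertex, provided we stay in the single-valued regime; in particular, once the level reaches $n(v')$ for the relevant vertex $v'$, the image is the full top-level interval $L(a)\cap L(b)$ for some adjacent pair of sides, and a further application of $f$ on the appropriate branch gives a set of the form $M(a)$ up to the ambiguous ends. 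The crucial finiteness input is that the maximal level $n(v)$ over all vertices $v$ of $\R$ is a constant depending only on $\R$; call it $K_0$. Hence in at most $K_0$ steps the orbit of a level-$1$ interval inside $A(e)$ surjects onto some $M(a)$. Since, by Theorem~\ref{thm:hypothesis} and irreducibility already being developed (Lemma~\ref{lem:BtoA}, Lemma~\ref{lem:MtoAC}), we can then travel from $M(a)$ to $A(e)$ and back in a bounded number of steps, and since the orbit eventually sweeps out all of $L(e)$, every $I \in \I$ with $I \subset M(e)$ is contained in $f^r(A(e))$ for some $r$ bounded by a constant $K$ depending only on $\R$. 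Here one uses the Markov property repeatedly: whenever $f^j(A(e))$ meets an interval $J \in \I$ it contains all of $J$, so ``reaching'' an interval is the same as ``covering'' it.

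The step I expect to be the main obstacle is the bookkeeping that guarantees the orbit of $A(e)$ genuinely covers \emph{all} of $M(e)$, not merely reaches some $M(a)$: one must chase how the successive images, which are unions of intervals of various levels at a moving sequence of vertices, fit together so that their union after $\le K$ steps includes every subinterval of $M(e)$. This requires carefully combining the level-raising behaviour at each vertex with Lemma~\ref{lem:nomeeting} (to control how the geodesics through adjacent vertices interleave on $\dd \DD$) and with part~(ii) of Lemma~\ref{lem:MtoAC}, which already lets one pass from an $M(\cdot)$ to any $A(d)$ with $d \neq (\cdot)^{-1}$. The uniformity of the bound $K$ then follows because all the combinatorial quantities involved --- the number of sides, the numbers $n(v)$, and the diameter of the obvious finite graph on $\Gamma_0$ recording adjacency of sides --- depend only on $\R$ and its side-pairing pattern.
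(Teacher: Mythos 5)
Your proposal has a genuine gap and also rests on two incorrect premises. First, $f$ restricted to $A(e)$ \emph{is} single-valued and equal to $e^{-1}$: by definition $A(e) = L(e) - \overline{C(\dd e)}$, and the overlaps $L(e)\cap L(d)$ are the top-level intervals at the vertices of $s(e)$, hence contained in the crowns you have already removed, so $A(e)$ meets no $L(d)$ with $d\neq e$. Second, the level-raising mechanism on which you build the argument runs backwards: as the proof of Lemma~\ref{lem:BtoA} shows, if $I\subset L(e)$ has level $r>1$ at a vertex $v$ then $f(I)=e^{-1}(I)$ has level $r-1$ at the vertex $e^{-1}v$, i.e.\ $f$ \emph{decreases} level until one reaches the level-one sets. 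So the orbit of a level-one interval does not climb to the top-level interval $L(a)\cap L(b)$ and then spill onto some $M(a)$ as you describe.

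More importantly, the one thing the lemma actually asserts --- that the images of $A(e)$ cover all of $M(e)$ within a uniformly bounded number of steps --- is precisely the step you defer as ``bookkeeping'', and appealing to ``irreducibility already being developed'' is circular, since this lemma is an ingredient of Proposition~\ref{thm:irred}. The paper's argument is a direct one-step computation: since $f_{|A(e)}=e^{-1}$ and the endpoints of $A(e)$ are the endpoints on $\dd\DD$ of the extensions of the two sides of $e\R$ adjacent to $s(e)$, the image $f(A(e))$ is the complement in $\dd\DD$ of $L(e^{-1})\cup L(d)\cup L(d')$, where $s(d),s(d')$ are the sides of $\R$ adjacent to $s(e^{-1})$. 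Because $M(e)$ is by definition disjoint from every $L(x)$ with $x\neq e$, this gives $f(A(e))\supset M(e)$ with $r=1$ whenever $s(e)$ is neither equal to nor adjacent to $s(e^{-1})$ --- in particular always when $\Gamma$ is torsion free. The remaining content of the proof, which your proposal does not address at all, is the two elliptic exceptional cases ($s(e)$ adjacent to $s(e^{-1})$, and $e=e^{-1}$), where one must route through a suitable $M(x)$ and the crowns at the vertices of $s(e)$ via Lemma~\ref{lem:MtoAC}, with a separate argument when $|\dd\R|=4$.
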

 \begin{proof}
By definition,  $f_{| A(e)} = e^{-1}$, and $e^{-1}$ maps $e\R$ (which is the copy of $\R$ adjacent to $\R$ along $s= s(e)$)  to $\R$.   The endpoints of $A(e)$ are the endpoints on
$\dd \DD$ of the extensions of the sides of  $e\R$  adjacent to $s$, see Figure~\ref{fig:sidepartition}. 
Thus the endpoints of $f(A(e))$ are the endpoints on $\dd \DD$ of the extensions of the sides of  $\R$  adjacent to $e^{-1}(s) = s(e^{-1})$. 
If these sides are $s(d),s(d')$, then provided that $e \neq e^{-1}$, we see that $f(A(e))$ is the complement in $\dd \DD$ of $L(e^{-1}) \cup L(d) \cup L(d')$. 
 This gives the result (with $r=1)$ in
the   case in which $s(e)$ is neither equal to nor   adjacent to $s(e^{-1})$. Since in both of the exceptionl cases $e$ is necessarily elliptic, this  in particular proves the result whenever $\Gamma$ is torsion free.

Now suppose that  $s(e)$ and  $s(e^{-1})$ are adjacent
with common vertex $v \in \DD$, so that $e$ is elliptic with fixed point $v$. Reasoning as above, we see that $f(A(e))$ covers $M(x)$ for any side $s(x)$  neither equal nor adjacent  to
$s(e^{-1})$. Fix one such $x$, which is possible since $|\dd \R|>3$.
Since by our assumption $s(e)$ and  $s(e^{-1})$ are adjacent, we have $x \neq e,e^{-1}$.
By Lemma~\ref{lem:MtoAC} (i), $f(M(x))$ covers all crowns except
$C(w)$ for $w \in \dd x^{-1}$. In particular,
$f(M(x)) \supset C(v)$.
Let $I \in \I$ be the level $2$ interval contained in $C(v) \cap L(e)$. Then $f(I)$ is the level $1$ interval in $C(v) \cap L(e)$,  which is equal to $L(e) - C(v)$.
  Thus suitable powers of $f$ applied to  $C(v)$
cover all of  $L(e) \supset M(e)$ which gives the result.



Now consider the case $e= e^{-1}$.
First    assume that
$\dd R$ has at least $5$ sides.
(Remember we count the edge containing the fixed point of $e$ as one side.)
In this case, there exist
$x,y$, distinct from each other and  from $e$, such that
$f(A(e))$ covers $M(x)$ and $M(y)$.
Now $f(M(x))$ covers  $A(e)$ since $e \neq x^{-1}$.
In addition,
$f(M(x))$ and $f(M(y))$ together cover  all crowns except  for those crowns
$C(w)$ with $w \in \dd x^{-1} \cap \dd y^{-1}$.
This implies that $f(M(x)) \cup f(M(y))$ covers
$C(v)$ for $v \in \dd e$, which gives the result.

Finally,  suppose that
$\dd R$ has  $4$ sides.
In this case, $f(A(e))$ only covers $M(x)$ for
$x$ the side opposite $e$.
As usual, $f(M(x))$ covers  $A(e)$.
If $x=x^{-1}$  then $f(M(x))$ covers $M(e)$. 
Otherwise, $x^{-1}$ is adjacent to $e$ and $f(M(x))$ covers
$M(y)$ where $y$ is the fourth side of $\dd R$ (opposite $x^{-1}$).
In this case we have $y = y^{-1}$. Letting $v,w$ be the vertices of $s(e)$ adjacent to $s(x^{-1})$ and $s(y)$  respectively, we see that $f(M(y))$ covers $C(v)$ and  $f(M(x))$ covers $C(w)$.  The result follows.  
\end{proof}

\begin{prop} \label{thm:irred} The Markov chain $P$  is irreducible.
\end{prop}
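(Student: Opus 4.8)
\emph{Proof proposal.} By definition $P$ is irreducible iff $P + P^2 + \cdots + P^N$ has all entries positive for some $N$; equivalently, there is $N$ such that for every ordered pair $I, J \in \I$ one has $f^r(I) \supset J$ for some $r$ with $1 \le r \le N$. I would assemble this from Lemmas~\ref{lem:BtoA}, \ref{lem:MtoAC} and~\ref{lem:AtoM}. First, by Lemma~\ref{lem:BtoA}, any $I \in \I$ satisfies $f^{r}(I) \supset A(e)$ for some $e \in \Gamma_0$ and some $r \le K$; so it suffices to bound, uniformly over $e \in \Gamma_0$ and $J \in \I$, the least $r$ with $f^r(A(e)) \supset J$.

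The engine is the following. By Lemma~\ref{lem:AtoM}, every $I \in \I$ with $I \subset M(e)$ satisfies $f^{r}(A(e)) \supset I$ for some $r \le K$, so $\bigcup_{1 \le r \le K} f^r(A(e))$ covers every such $I$. Apply $f$ once more: since $f = e^{-1}$ on $M(e)$, and (by the Markov property together with Lemma~\ref{lem:nomeeting} and the even corners hypothesis) $f$ carries the subdivision of $M(e)$ by intervals of $\I$ onto that of $f(M(e))$, we obtain that $\bigcup_{2 \le r \le K+1} f^r(A(e))$ covers every $\I$-interval contained in $f(M(e))$. By Lemma~\ref{lem:MtoAC}, $f(M(e))$ contains $A(d)$ for every $d \ne e^{-1}$ and the crown $C(v)$ for every interior vertex $v \notin \dd e^{-1}$. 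Since every $J \in \I$ is contained in some $L(d)$, and $L(d)$ is the union of $A(d)$ with its two end pieces, each of which lies in a crown $C(v)$ with $v \in \dd d$, it follows that every $J \in \I$ lies in some $A(d)$ or some crown $C(v)$. Hence the engine reaches every target in at most $K+1$ steps, except possibly those inside $A(e^{-1})$ or inside $C(v)$ for $v \in \dd e^{-1}$.

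To reach these exceptional targets I would pick an auxiliary generator $d \ne e, e^{-1}$ with $v \notin \dd d^{-1}$ for the (at most two) vertices $v$ of $s(e^{-1})$; run the engine from $A(e)$ to $A(d)$ (legitimate since $d \ne e^{-1}$), and then run it again from $A(d)$, which reaches $A(e^{-1})$ because $e \ne d$, and reaches $C(v)$ because $v \notin \dd d^{-1}$. This yields a uniform bound, e.g.\ $N = 3K + 2$, on the number of steps, so $P$ is irreducible.

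The real obstacle, I expect, is the existence of the auxiliary generator $d$, and more generally the low-complexity cases: Lemmas~\ref{lem:BtoA}, \ref{lem:MtoAC} and~\ref{lem:AtoM} require $|\dd \R| > 3$, and when $|\dd \R| = 4$ there may be too few generators to choose $d$ as above, so one must argue directly under the extra hypotheses of Theorem~\ref{thm:hypothesis} — in the spirit of the case split already present in the proof of Lemma~\ref{lem:AtoM} — while $|\dd \R| = 3$ (case (iii) of Theorem~\ref{thm:hypothesis}) needs a separate treatment that exploits the vertex on $\dd \DD$. All the geometry is encapsulated in the three lemmas; the rest is finitary bookkeeping, delicate only in these small cases.
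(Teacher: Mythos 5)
Your pipeline --- reduce to $A(e)$ via Lemma~\ref{lem:BtoA}, cover $M(e)$ via Lemma~\ref{lem:AtoM}, propagate with Lemma~\ref{lem:MtoAC}, then chase the exceptional targets $A(e^{-1})$ and $C(v)$, $v\in\dd e^{-1}$ --- is exactly the paper's, and for $|\dd\R|\ge 5$ your argument goes through. But the two places you flag as ``delicate'' are genuine gaps, and they are where the paper does real work. First, your scheme for the exceptional targets needs a single auxiliary $d\neq e,e^{-1}$ with $\dd d^{-1}\cap\dd e^{-1}=\emptyset$, i.e.\ $s(d^{-1})$ sharing no vertex with $s(e^{-1})$. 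When $|\dd\R|=4$ the only candidate for $s(d^{-1})$ is the side opposite $s(e^{-1})$, and if opposite sides are paired that side is $s(e)$, forcing $d=e^{-1}$: no admissible $d$ exists. The paper sidesteps this with \emph{two} auxiliaries $x,y$, distinct from each other and from $e$ and $e^{-1}$ (possible whenever $|\dd\R|>3$): the sets $f(M(e))$, $f^r(A(x))$, $f^s(A(y))$ fail to cover only the crowns at $\dd e^{-1}$, $\dd x^{-1}$, $\dd y^{-1}$ respectively, and since three distinct sides of a polygon cannot share a vertex, $C(\dd x^{-1})\cap C(\dd y^{-1})\cap C(\dd e^{-1})=\emptyset$; meanwhile $A(e^{-1})$ is reached through $A(x)$ because $x\neq e$. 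That triple-intersection observation is the missing idea that makes the bookkeeping uniform over all $|\dd\R|>3$.

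Second, $|\dd\R|=3$ is not a deferrable footnote: there $A(e)$ may be empty, so Lemmas~\ref{lem:BtoA} and~\ref{lem:AtoM} are simply unavailable and your engine has no fuel. The paper handles this by enumerating the three configurations permitted by Theorem~\ref{thm:hypothesis}(iii) (three ideal vertices; two ideal vertices with a self-paired side; one ideal vertex) and verifying irreducibility by hand in each, using ad hoc sets such as $B(e^{\pm})=L(e^{\pm})-C(u)$ and $J(u)=C(u)-L(e)$ in place of the $A(e)$. So: right strategy and right lemmas, but the ``finitary bookkeeping'' you postpone is precisely the content of the proof in the small cases, and your main mechanism itself needs the two-auxiliary repair already at $|\dd\R|=4$.
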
 \begin{proof} We have to show that there exists $K$, depending only on $\R$, such that
for any $I,J \in \I$, we have $f^r(I) \supset J$ for some
$0 \le r \le K$.

Assume first that
$|\dd R| >3$.
By Lemma~\ref{lem:BtoA}, we may as well
assume that $I = A(e)$ for some $e \in \Gamma_0$.
By Lemma~\ref{lem:AtoM}, it will be enough to show that images of
$M(e)$ cover $\dd \DD$.
By Lemma~\ref{lem:MtoAC}, $f(M(e))$ covers all crowns except $C(\dd e^{-1})$ and
all sets $  A(x)$ with $x \neq e^{-1}$.
Since $|\dd \R| >3$ we may choose $x,y$
distinct from each other and from $e$ and $e^{-1}$ such that
$f(M(e)) \supset A(x) \cup A(y)$.
By Lemmas~\ref{lem:AtoM} and~\ref{lem:MtoAC}, there exists $r<K$ such that
$f^r(A(x))$ covers $A(e^{-1})$
and  all crowns except $C(\dd x^{-1})$.
Likewise $f^s(A(y))$ covers  all crowns except $C(\dd y^{-1})$ for some $s<K$.
Now by choice $x,y$ and $e$ are distinct and so
$C(\dd x^{-1}) \cap C(\dd y^{-1}) \cap C(\dd e^{-1})  = \emptyset$.
The result follows.

Finally, we have to consider the case in which
$|\dd R| =3$.
Notice that this is the only case in which it is possible that
$A(e) = \emptyset$.
  By hypothesis, at least one vertex of $\R$ is on $\dd \DD$.
There are only three possible cases:
\begin{enumerate}
\renewcommand{\labelenumi}{(\roman{enumi})}
\item $\R$ has three vertices on $\dd \DD$;
\item $\R$ has two vertices $v,w$ on $\dd \DD$. The side joining $v,w$ is
paired to itself
by an order two elliptic $x$; the remaining two sides are paired to each other
by an elliptic $e$ with fixed point at the third (finite) vertex $u$;
\item $\R$ has one vertex $v$ on $\dd \DD$. The two sides meeting at $v$ are paired to each other
by $e$, the third side is paired to itself by an order two elliptic $x$.
\end{enumerate}

Case (ii). Set $B(e^{\pm}) = L(e^{\pm}) - C(u)$. Note that $f(B(e^{\pm})) = L(x)$
and $f(L(x)) = \dd \DD - L(x)$. Furthermore, for each $I \subset C(u)$ it is clear
that $f^r(I) = (B(e^{\pm}))$ for some $r \le n(u)$.
This proves the result.

Case (iii).  Let $u$ be the finite endpoint of side $e$ and let
$J(u) = C(u) - L(e)$. Define $J(w)$ similarly relative to
$w$  the finite endpoint of $e^{-1}$. Note that $f(J(u)) \supset A(e^{-1})$
and $f(J(w)) \supset A(e)$.
It follows that the image of every interval in $C(u) \cup C(v)$
eventually covers 
either $A(e)$ or $A(e^{-1})$.
Further, a  bounded image of $A(e)$  covers $L(e ) \cup J(u)$
and a  bounded image of $A(e^{-1})$  covers $L(e^{-1} ) \cup J(w)$.
The result follows.

Case (i) is easier and is left to the reader.
\end{proof}

\subsection{Strict irreducibility}

We now investigate strict irreducibility of the transition matrix $P$.
It is well known and easy to see  that $P$ is strictly irreducible
 if the equivalence relation $\sim$ on $\I$ generated
by $ I \sim J$ if   $f(I) \cap f(J) \neq \emptyset$
has just one equivalence class.
We show that if $|\dd \R| >4$ then  $f$ is always strictly irreducible,
while if  $|\dd \R| \le 4$ the map  $f$ may or may not be strictly irreducible
depending on the precise arrangement of   $\R$ and its side pairings. In
particular, the continued fraction map associated to the standard fundamental
domain for $SL(2,\ZZ)$  is \emph{not} strictly irreducible.

\begin{lemma} The Markov chain associated to any choice of
 Markov map $f$ for the fundamental domain
$|z| >1, -1/2 < \Re z < 1/2$ is not strictly irreducible.
\end{lemma}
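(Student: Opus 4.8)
The plan is to invoke the criterion recalled just before the statement: it is enough to partition $\I$ into two nonempty parts $\I_1,\I_2$ with $f(I)\cap f(J)=\emptyset$ for all $I\in\I_1$ and $J\in\I_2$, since then the relation $\sim$ generated by ``$f(I)\cap f(J)\ne\emptyset$'' has more than one class and $P$ is not strictly irreducible.

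First I would fix the concrete data, working in the upper half-plane model, so that $\dd\DD=\RR\cup\{\infty\}$ and $\R=\{\,|z|>1,\ |\Re z|<1/2\,\}$. The side pairings are generated by $T\co z\mapsto z+1$ (identifying the two vertical sides) and the order-two elliptic $S\co z\mapsto -1/z$ (pairing the unit-circle side to itself), so $\Gamma_0=\{T,T^{-1},S\}$, $|\dd\R|=3$, and the unique ideal vertex is $\infty$; thus $\R$ satisfies hypothesis (iii) of Theorem~\ref{thm:hypothesis}, and by Proposition~\ref{thm:irred} the chain $P$ is already irreducible, so only strict irreducibility is in question. From the definitions one reads off $L(S)=(-1,1)$, $L(T)=(1/2,+\infty)$ and $L(T^{-1})=(-\infty,-1/2)$ as arcs of $\dd\DD$, with $f|_I=e^{-1}$ whenever $I\subset L(e)$. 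One also checks that $\{-1,0,1,\infty\}\subset\P$: the points $\pm1$ are the ideal endpoints of the complete geodesic carrying the unit-circle side, $\infty$ is a vertex of $\R$, and $0$ is an endpoint of the geodesic $|z+1|=1$, which extends a side of $T^{-1}\R$ that meets $\dd\R$ at the vertex $e^{2\pi i/3}$. Hence no interval of $\I$ contains $-1$, $0$ or $1$ in its interior, so each $I\in\I$ lies inside exactly one of the four arcs $(-\infty,-1)$, $(-1,0)$, $(0,1)$, $(1,+\infty)$ of $\dd\DD$.

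Now let $\I_1$ be the set of $I\in\I$ contained in $(-\infty,-1)\cup(0,1)$ and $\I_2$ those contained in $(-1,0)\cup(1,+\infty)$; both are clearly nonempty. The key assertion is that $f(I)\subset(-\infty,0)$ for every $I\in\I_1$ and $f(I)\subset(0,+\infty)$ for every $I\in\I_2$, no matter how $f$ is resolved on the ambiguous intervals. This is a short case analysis using $f|_I=e^{-1}$ with $I\subset L(e)$: if $I\subset(-\infty,-1)$ then necessarily $e=T^{-1}$, so $f(I)=I+1\subset(-\infty,0)$; if $I\subset(0,1)$ then $e\in\{S,T\}$, and $S$ carries $(0,1)$ into $(-\infty,-1)$ while $T^{-1}$ carries $(1/2,1)$ into $(-1/2,0)$, so again $f(I)\subset(-\infty,0)$; the cases $I\subset(-1,0)$ and $I\subset(1,+\infty)$ run the same way (for the first, $e\in\{S,T^{-1}\}$, with $S$ carrying $(-1,0)$ into $(1,+\infty)$ and $T$ carrying $(-1,-1/2)$ into $(0,1/2)$) and yield $f(I)\subset(0,+\infty)$. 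Granting the assertion, for $I\in\I_1$ and $J\in\I_2$ we get $f(I)\cap f(J)\subset(-\infty,0)\cap(0,+\infty)=\emptyset$, so no $\sim$-class meets both $\I_1$ and $\I_2$; hence $\sim$ has at least two classes, and $P$ is not strictly irreducible.

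The main obstacle is pinning down the relevant part of the partition $\P$---that is, deciding which complete geodesics of $\T$ actually meet $\dd\R$---since with $|\dd\R|=3$ we are outside the scope of Lemma~\ref{lem:nomeeting}; but the argument only uses the four manifestly forced points $\{-1,0,1,\infty\}$, and everything after that is an elementary computation with the three M\"obius maps $T^{\pm1}$, $S$ which is, conveniently, completely insensitive to the choices available in defining $f$ on the two intervals $L(S)\cap L(T^{\pm1})$.
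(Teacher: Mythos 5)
Your proof is correct, and it takes a genuinely different (and somewhat more economical) route than the paper's. The paper computes the full partition $\P=\{-2,-1,-1/2,0,1/2,1,2,\infty\}$, writes out the complete transition table for the eight intervals, and reads off that $\sim$ has exactly four classes, handling the ambiguous intervals by a remark at the end. You instead extract only the four forced points $\{-1,0,1,\infty\}$ of $\P$ and prove the coarser invariant that $f$ maps every interval of $\I$ lying in $(-\infty,-1)\cup(0,1)$ into $(-\infty,0)$ and every interval lying in $(-1,0)\cup(1,+\infty)$ into $(0,+\infty)$, for \emph{any} resolution of $f$ on the overlap intervals $L(S)\cap L(T^{\pm 1})$; this two-colouring immediately splits the generating relation $f(I)\cap f(J)\neq\emptyset$ and hence the equivalence classes. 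What your approach buys: you never need to determine $\P$ or the transition matrix exactly, and the insensitivity to the choice of $f$ is built in rather than checked afterwards (incidentally sidestepping the paper's slip in naming the overlap regions as $\mathbf{4},\mathbf{5}$ rather than $\mathbf{3},\mathbf{6}$). What the paper's computation buys is the finer information that there are exactly four classes. One point worth making explicit in either version: the criterion quoted in the paper is stated only as ``one class $\Rightarrow$ strictly irreducible,'' whereas both proofs use the converse; this is harmless because $(AA^T)_{IJ}>0$ exactly when $f(I)\cap f(J)\neq\emptyset$, so irreducibility of $AA^T$ is literally connectedness of the graph generating $\sim$, but you (like the paper) leave that unsaid.
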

\begin{proof} The continuations of the sides of $\dd \R$
through the two vertices at $(1 \pm \sqrt 3 i ) /2$
meet the real axis $\RR$
in the $7$ points $-2,-1,-1/2,0,1/2,1,2$
which partition $\RR$
into $8$ intervals which we number $\bf {1}, \bf {2} \ldots, \bf {8}$
in order from left to right,
thus for example $\bf{3}$ denotes the interval  $(-1,-1/2)$.
The map $f$ is defined as:
\begin{eqnarray*}
f(x)&= &x+1 \ \ {\rm for} \ \ x \in \bf {1} \cup \bf {2},\\
f(x) &= &-1/x \ \ {\rm for} \ \ x \in \bf {3}  \cup \bf {4} \cup \bf {5} \cup  \bf {6}, \\
f(x)& = &x-1 \ \ {\rm for} \ \ x \in \bf {7} \cup \bf {8}.
\end{eqnarray*}
There is a choice for  $f$ on the overlap regions
$\bf {4} $ and $\bf {5}$: for definiteness we have taken the usual choice
$f(x) =  -1/x$ for $x \in \bf {4} \cup \bf {5}$
which is associated to the continued fraction map.

It is easy to write down the transition matrix $P$ for $f$.
We find $\bf {1} \rightarrow  \bf {1} \cup \bf {2}$,
$\bf {2} \rightarrow  \bf {3} \cup \bf {4}$,
$\bf {3} \rightarrow  \bf {7}$,
$\bf {4} \rightarrow  \bf {8}$,
$\bf {5} \rightarrow  \bf {1}$,
$\bf {6} \rightarrow  \bf {2}$,
$\bf {7} \rightarrow  \bf {5} \cup \bf {6}$,
and $\bf {8} \rightarrow  \bf {7} \cup \bf {8}$.
From this we easily see that there are four equivalence classes under $\sim$:
$\{\bf {3}, \bf {4} , \bf {8} \}$,
$\{\bf {5} , \bf {6} , \bf {1}\}$,
$\{\bf {2} \}$
and $\{\bf {7}\}$.
This gives the result.
We remark that even had we made the other choice for $f$ on either of
$\bf {3}$ and  $\bf {6}$, then  there
are still at least two equivalence classes.
\end{proof}

 Another interesting example is furnished by the group
$ \Gamma = \langle a,b,c: a^2=b^2=c^2 \rangle$
 where  $\R$  is the ideal triangle with  vertices $0,1, \infty$
 and $a, b, c \in PSL(2,\ZZ)$ are elliptics of order two
 with  fixed points at $i$, $(1+i)/2$ and $1+i$ respectively.
In this case one checks  that
$f$ \emph{is} strictly irreducible.

\medskip

We base our general proof of strict  irreducibility on the following lemma.
\begin{lemma}
\label{lem:doublecover} Suppose that there exists a family of open intervals
$J_0, \ldots, J_m \in \I$ such that
\begin{enumerate}
\renewcommand{\labelenumi}{(\roman{enumi})}
\item $\cup_{i=0}^m f(J_i)$ covers $\dd \DD - \P$;
\item $f(J_i) \cap f(J_{i+1}) \neq \emptyset$ for $i=0, \ldots, m-1$.
\end{enumerate}
 Then
the Markov chain associated to the
 Markov map $f$   is   strictly irreducible.
\end{lemma}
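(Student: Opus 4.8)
The plan is to show that under hypotheses (i) and (ii) the equivalence relation $\sim$ on $\I$ has a single equivalence class, which as noted just before the lemma implies strict irreducibility. Recall that $I\sim J$ whenever $f(I)\cap f(J)\neq\emptyset$, and that $\sim$ is the equivalence relation generated by these elementary relations. The key observation is the Markov property of $f$: if $J\in\I$ and $f(I)\cap J\neq\emptyset$ then $f(I)\supset J$; dually, since $f$ maps intervals of $\I$ onto unions of intervals of $\I$, two intervals $I,I'$ with $f(I)\cap f(I')\neq\emptyset$ must in fact have $f(I)$ and $f(I')$ overlapping in a whole subinterval, hence sharing at least one element $K\in\I$ with $K\subset f(I)\cap f(I')$.

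First I would use this to record the basic mechanism: if $K\in\I$ and $K\subset f(I)\cap f(J)$ for some $I,J\in\I$, then every $I'\in\I$ with $K\subset f(I')$ satisfies $I'\sim I$ and $I'\sim J$; more generally, for any interval $L$ covered by $f(I)$, every preimage-interval mapping onto a piece of $L$ lies in the $\sim$-class of $I$. The real content is then to propagate one fixed class over all of $\I$ using the chain $J_0,\dots,J_m$. By hypothesis (ii), $f(J_i)\cap f(J_{i+1})\neq\emptyset$, so $J_i\sim J_{i+1}$ for each $i$, whence $J_0,\dots,J_m$ all lie in a single $\sim$-class, call it $\mathcal C$. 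By hypothesis (i), $\bigcup_{i=0}^m f(J_i)$ covers $\dd\DD-\P$, so for an arbitrary $I\in\I$ there is some $i$ with $f(J_i)\cap I\neq\emptyset$; by the Markov property $f(J_i)\supset I$. Now take any $J\in\I$ with $f(J)\cap f(J_i)\neq\emptyset$ — for instance $J_i$ itself, or more usefully: since $I\subset f(J_i)$, pick any $K\in\I$ with $K\subset f(I)$ (such $K$ exists as $f(I)$ is a nonempty union of intervals of $\I$), and observe $K$ is also covered by $f$ applied to the appropriate subinterval of $J_i$. The cleanest route: because $I\subset f(J_i)$, choose a point $x\in I$ and consider $f(x)$; some interval $K\in\I$ contains $f(x)$, and $K\subset f(I)$ as well as $K\subset f(J_i')$ for the subinterval $J_i'\subset J_i$ mapping onto $I$ — but $J_i'$ need not lie in $\I$, so instead I would argue directly that $f(I)\cap f(J_i)\supset f(I)\neq\emptyset$, giving $I\sim J_i$, hence $I\in\mathcal C$. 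Since $I\in\I$ was arbitrary, $\mathcal C=\I$ and we are done.

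I expect the main obstacle to be exactly the subtlety flagged above: making precise the claim that covering of $\dd\DD-\P$ by the $f(J_i)$ forces $\sim$-equivalence of an arbitrary $I$ to some $J_i$. One must be careful that $I\subset f(J_i)$ does genuinely imply $f(I)\subset f(f(J_i))$ is consistent with the elementary relation $f(I)\cap f(J_i)\neq\emptyset$ — and indeed it does, trivially, since $f(I)\neq\emptyset$ and $f(I)\subset f(J_i)$ would need $I\subset J_i$, which is false in general. The correct and robust formulation is: $I\subset f(J_i)$ means $I$ is one of the $\I$-intervals making up the image $f(J_i)$; since $f(J_i)$ is (the interior of the closure of) a union of such intervals, and $f$ restricted to $J_i$ is a single Möbius map $e^{-1}$, the preimage $e(I)\subset J_i$ is a genuine subinterval, but the point is simply that $I$ and $J_i$ are linked because $f(J_i)$ and $f(J_i)$ overlap — this is vacuous. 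The honest fix is to use two consecutive members: $I\subset f(J_i)$ and, choosing $K\in\I$ with $K\subset f(I)$, note $K\subset f(I)\cap f(J_{i})$ fails in general, so one instead shows $I\sim J_i$ via: $f(I)$ and $f(J_i)$ need not meet, but $I$ being an $\I$-subinterval of $f(J_i)$ means there is $J'\in\I$, $J'\subset J_i$ with $f(J')=I$... Since $J_i\in\I$ is itself a single interval, either $J'=J_i$ (so $f(J_i)=I$, trivial) or $J'\subsetneq J_i$, impossible as $J_i\in\I$ is a single $\I$-interval and $f$ is injective on it. Hence $f(J_i)$, being a single Möbius image of a single $\I$-interval, equals a single interval on $\dd\DD$, and $I\subset f(J_i)$ with $I\in\I$ forces — by the Markov property applied to $J_i$ — that $f(J_i)\supset I$, so $I$ and $J_i$ are related by $f(J_i)\cap f(J_i)\ni$ anything: the correct statement is $I\sim J_i$ because $I$ and $J_i$ both map under $f$ to overlapping sets only if we iterate. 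I would resolve this by instead invoking: for the arbitrary $I$, pick $J_i$ with $f(J_i)\supset I$; then for \emph{any} $J\in\I$ with $f(J)\cap f(J_i)\neq\emptyset$ we have $J\sim J_i$, and in particular the collection of all such $J$ — which by the covering hypothesis and irreducibility (Proposition~\ref{thm:irred}) includes a $\sim$-path reaching $I$ — forms one class. Concretely: $I\subset f(J_i)$, so $f(I)\subset f(f(J_i))$; pick $K\in\I$, $K\subset f(I)$; then $K\subset f(I)$ and, since $I\subset f(J_i)$ so that $f(J_i)$ meets $I$, the interval $K$ is covered by $f^2(J_i)$, but more to the point $f(I)\cap f(I)\ni K$ is trivial. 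The clean argument, which I would write out carefully, is: the relation ``$\exists K\in\I: K\subset f(I)\cap f(J)$'' is symmetric and its transitive closure is $\sim$; the sets $\{I\in\I: \text{fixed }K\subset f(I)\}$ are $\sim$-cliques; two such cliques (for $K,K'$) meet iff some single $I$ has $K,K'\subset f(I)$, i.e. iff $K\cup K'\subset f(I)$ for some $I$; walking along $J_0,\dots,J_m$ and using (i)–(ii) shows all the cliques $\{I: K\subset f(I)\}$ over all $K\in\I$ are joined, hence there is one class. That combinatorial bookkeeping, rather than any geometry, is where the care is needed; the geometry has already been done in establishing the Markov property and the existence of the chain $J_0,\dots,J_m$.
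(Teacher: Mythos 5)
Your final ``clique'' formulation is correct and is in substance the paper's own two--line proof: every $I\in\I$ is $\sim$--equivalent to some $J_i$ because $f(I)$ meets some $f(J_i)$, and the $J_i$ are all mutually equivalent by (ii). However, the long middle section of your writeup is a detour caused by a single misstep, and it contains intermediate claims that are false as stated, so it should be excised rather than left as a record of the search. The misstep is that you apply hypothesis (i) to the interval $I$ itself, obtaining $I\subset f(J_i)$, and then try to extract $I\sim J_i$ from that containment. But $\sim$ is generated by overlap of \emph{images}, so $I\subset f(J_i)$ is not the relevant configuration, and your attempted repairs do not hold: for instance ``$f(I)\cap f(J_i)\supset f(I)$'' would require $f(I)\subset f(J_i)$, which does not follow. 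The correct one--line step is to apply the covering hypothesis to $f(I)$ rather than to $I$: since $f|_I$ is a M\"obius map, $f(I)$ is a nonempty open arc, so $f(I)-\P$ is nonempty and lies in $\dd \DD-\P\subset\cup_{i} f(J_i)$; hence $f(I)\cap f(J_i)\neq\emptyset$ for some $i$, which is precisely the generating relation $I\sim J_i$. With that substitution the whole proof is three lines, and neither the Markov property, nor Proposition~\ref{thm:irred}, nor the clique bookkeeping at the end of your proposal is needed.
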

\begin{proof}
By assumption (i),  every interval $I \in \I$ is equivalent to at least
one of the $J_i$.
By assumption (ii),  $J_i  \sim J_{i+1} $ for all $0 \le i < m$.
The result follows.
\end{proof}

\begin{prop}
\label{thm:strictlyirreducible}
Suppose that $|\dd \R | \ge 5$. Then
the Markov chain associated to the
 Markov map $f$   is   strictly irreducible.
\end{prop}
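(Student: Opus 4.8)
The plan is to exhibit a chain of intervals $J_0, \ldots, J_m \in \I$ satisfying the two hypotheses of Lemma~\ref{lem:doublecover}, and then invoke that lemma. The natural candidates for the $J_i$ are the sets $M(e)$, $e \in \Gamma_0$: by Lemma~\ref{lem:MtoAC}, each $f(M(e))$ is large — it misses only $A(e^{-1})$ and portions of $C(\dd e^{-1})$ — so a small family of them will cover $\dd \DD - \P$. Strictly speaking the $M(e)$ need not lie in $\I$, but each contains a level $2$ interval of $\I$ whose $f$-image is the adjacent level $1$ interval, which exhausts $L(e) - C(\dd e)\supset A(e)$; so after passing to such a subinterval and applying $f$ once, we may freely work with the sets $A(e)$, whose images under $f$ we control by Lemma~\ref{lem:AtoM} (strictly, by the computation in its proof: when $s(e)$ is neither equal to nor adjacent to $s(e^{-1})$, $f(A(e))$ is the complement of $L(e^{-1}) \cup L(d) \cup L(d')$). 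A cleaner route is to take the $J_i$ themselves to be suitable level $2$ intervals inside the various $M(e)$; one verifies directly that $f(J_i)$ then covers $L(e) - C(\dd e)$, and in particular contains $M(d)$ for all $d$ with $s(d)$ disjoint from $s(e)$ and from $s(e^{-1})$.

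\medskip

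With this reduction, condition (i) of Lemma~\ref{lem:doublecover} becomes: the images $f(M(e))$, as $e$ ranges over $\Gamma_0$, cover $\dd \DD$. Since $f(M(e))$ omits only $A(e^{-1})$ together with parts of $C(\dd e^{-1})$, and since (because $|\dd\R| \ge 5$) for any prescribed side $s(e^{-1})$ there is another side $s(e')$ disjoint from it, the set $A(e^{-1})$ and the crowns at the vertices of $e^{-1}$ are recaptured by $f(M(e'))$. Taking all of $\Gamma_0$ certainly suffices for (i). Condition (ii) is a connectivity statement: I must order the sets $M(e_0), M(e_1), \ldots, M(e_m)$ so that consecutive images overlap. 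Here I would use that $f(M(e)) \supset A(d)$ for every $d \neq e^{-1}$ (Lemma~\ref{lem:MtoAC}(ii)) together with $f(M(e)) \supset C(v)$ for every vertex $v \notin \dd e^{-1}$ (Lemma~\ref{lem:MtoAC}(i)): two images $f(M(e))$ and $f(M(e'))$ fail to meet only if between them they miss everything, i.e. only if $\{d : d = e^{-1}\} \cup \dd e^{-1}$ and $\{d : d = e'^{-1}\} \cup \dd e'^{-1}$ exhaust $\Gamma_0$ and the vertex set — which, when $|\dd\R| \ge 5$, never happens. So in fact \emph{any} two of the $f(M(e))$ already intersect, and condition (ii) holds for an arbitrary ordering. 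This is the crux: once $|\dd\R| \ge 5$ there is always enough room that the large sets $f(M(e))$ pairwise overlap, so the relation $\sim$ identifies all of $\I$ into a single class.

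\medskip

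The main obstacle I anticipate is bookkeeping rather than anything conceptual: one must be careful that the sets $M(e)$, $A(e)$, $C(v)$ are genuine unions of intervals of $\I$ (the level $0$ and level $1$ caveats around each vertex, and the possible ambiguity of $f$ on level $n(v)$ intervals), so that the $J_i$ chosen really do lie in $\I$ and the containments $f(J_i) \supset (\text{stuff})$ are containments of bona fide subshift cylinders. One must also treat the degenerate possibilities — $e = e^{-1}$ elliptic, or $s(e)$ adjacent to $s(e^{-1})$ — exactly as in the proof of Lemma~\ref{lem:AtoM}, but the hypothesis $|\dd\R| \ge 5$ gives enough spare sides that these cause no trouble (this is precisely where the cases $|\dd\R| \le 4$, and the $SL(2,\ZZ)$ counterexample above, genuinely differ). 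Modulo this routine verification, the proposition follows by applying Lemma~\ref{lem:doublecover} to the family constructed above.
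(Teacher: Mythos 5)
Your proposal is correct and is essentially the paper's argument: the paper likewise applies Lemma~\ref{lem:doublecover}, taking the family to be the intervals $A(e)$, $e \in \Gamma_0$, whose images $f(A(e))$ are the large arcs complementary to $L(e^{-1})\cup L(d)\cup L(d')$ (with $s(d),s(d')$ adjacent to $s(e^{-1})$), and observing that for $|\dd\R|\ge 5$ these arcs overlap in the required manner around $\dd\DD$. Your detour through the sets $M(e)$ and Lemma~\ref{lem:MtoAC} reaches the same covering and overlapping facts, just less directly.
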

\begin{proof}
We show the sets $A(e), e \in \Gamma_0$, satisfy the requirements
of Lemma~\ref{lem:doublecover}.
Suppose that the extensions of the sides of $\R$ adjacent to
$e^{-1}$ meet $\dd \DD$ in points $V$ and $W$. Then  $f(A(e))$ is the interval between
$V$ and $W$ and not containing $L(e^{-1})$.
Since $|\dd \R | \ge 5$, this set of intervals overlaps round
 $\dd \DD$ in the required manner.
\end{proof}

 If $|\dd \R | = 4$, then $f$   may or may not be
   strictly irreducible. For example, suppose that  $\R$ has two opposite
    vertices $v,w \in \dd \DD$
 while  the other   opposite pair are in $\DD$.
 Suppose the sides adjacent to $v$ are
   paired, and equally the sides adjacent to $w$. Then one can verify
   directly that
   $\sim$ has two equivalence
   classes. The idea is that the points $v$ and $w$ divide $\dd \DD$ into two halves
   $E$ and $F$ say. One checks easily that
   the image of every interval  $I$ is contained either completely in $E$,
   or completely in $F$, so the intervals whose images fall in these
    two halves cannot  be equivalent.

   On the other hand, if $\R$ has $4$ sides all of which lie in $ \DD$,
   then by hypothesis we  assume that at least three geodesics
   in $\T$ meet at each vertex.
   One can check that the  images of the level one intervals at each vertex
   cover $\dd \DD$ in the manner required by Lemma~\ref{lem:doublecover}.

    We have already studied similar phenomena when $|\dd \R | = 3$.

\subsection{The alphabet map}
\label{sec:alphabet}

Finally we prove Proposition~\ref{thm:almostinjective}. We begin by recalling some further terminology from~\cite{BiS, BoS, STrieste}.

Let $e_{i_0}\ldots e_{i_n}$ be a word in the generators $\Gamma_0$.
Since $\Gamma_0$ consists of side pairing transformations of $\R$, the
regions $\R$ and $e_{i_r} \R$, and more generally
$e_{i_0}\ldots e_{i_{r-1}}\R$ and $e_{i_0}\ldots e_{i_{r}} \R$ for
$0 < r < n$,
 have a common side. The word
  $e_{i_0}\ldots e_{i_n}$
is called a \emph{cycle} if in the tesselation of $\DD$
by images of $\R$, the regions
$\R , e_{i_0} \R, e_{i_0} e_{i_1} \R ,\ldots, e_{i_0}\ldots e_{i_n} \R$
are arranged in order round a common vertex $v \in \DD$, see Section~\ref{sec:markov}.
(According to this definition, a single letter $e$ is always a cycle
provided that at least one of the vertices $\dd e$ is in $\DD$.)
Cycles $e_{i_0}\ldots e_{i_s}$, $e_{j_0 }\ldots e_{j_t}$ are called
\emph{consecutive} if there exists $ e \in \Gamma_0$ such that
$e_{i_0}\ldots e_{i_s}e$ and  $e^{-1} e_{j_0 }\ldots e_{j_t}$
are both cycles, see~\cite{BiS} for more details. This means that
$e_{i_0}\ldots e_{i_s}$ is a cycle at $v$ and that
$e_{j_0 }\ldots e_{j_t}$ is a cycle with the same orientation at $w$, where
$v$ and $w$ are the endpoints of the side $e^{-1}$ of $\R$, see Figure~\ref{fig:consecutive}.

\begin{figure}[hbt]
\centering 
\includegraphics[height= 6cm]{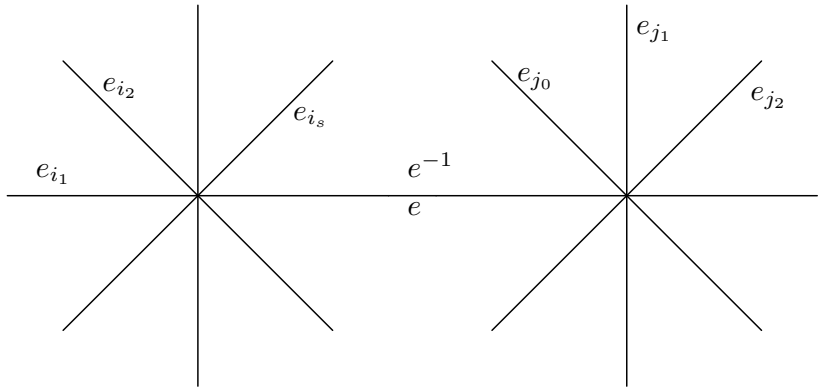} 
\caption{Consecutive cycles.}
 \label{fig:consecutive}
\end{figure} 

The word  $e_{i_0}\ldots e_{i_N}$
is called a \emph{special chain} if it consists of
a sequence of consecutive cycles
$B_1 B_2 \ldots B_n$  at vertices $v_1, \ldots, v_n$
such that $B_1$ has length at most  $n(v_1)-1$, $B_n$ has length at most  $n(v_n)$
and $B_i$ has length exactly  $n(v_i) -1$ for $1<i<n$.
 The geometrical meaning of this definition is that
 the sequences of copies of $\R$ corresponding
to a special chain
all touch a common hyperbolic line $t \subset \T$, all except possibly the first or last one
lying on the same side of $t$.

\begin{remark} {\rm  Special chains are intimately connected to the solution of the word
problem for Fuchsian groups given in~\cite{SInf, SMarkov, BiS}.
Special chains are shortest words and
two shortest words $V=e_{i_0}\ldots e_{i_N}$ and $W= e_{j_0}\ldots e_{j_N}$
with $e_{i_r} \neq  e_{j_r}$ for all $r$ represent the same element of $\Gamma$
only if either both $V$ and $W$ are single
cycles  of length $n(v)$, or are both  sequences of consecutive cycles
of lengths $n(v_1) -1,n(v_2) -1, \ldots,n(v_{n-1}) -1, n(v_n) $
and  $n(v_1),n(v_2) -1, \ldots,n(v_{n-1}) -1, n(v_n)-1 $
respectively. In the latter  case, the sequences of copies of $\R$ corresponding
to the words $ e_{i_0}\ldots e_{i_{N}}$ and
$e_{j_0}\ldots e_{j_N}$ meet along a common line in $\T$.}
\end{remark}

\begin{prop}
\label{prop:almostinjective}
Let $\pi: \Sigma \to \Gamma$ be the alphabet map. Then
$\pi^{-1}(e_{i_0}\ldots e_{i_N}) \le 2$ with equality if and only if
$e_{i_0}\ldots e_{i_N}$ ends in  a special chain.
\end{prop}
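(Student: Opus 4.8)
The plan is to analyze the fibers of $\pi$ over a word $w = e_{i_0}\ldots e_{i_N}$ by tracking how the two-valuedness of $f$ on top-level intervals propagates through the coding. Recall that $\pi$ fails to be injective only because a single generator $e \in \Gamma_0$ can be the label of two distinct intervals $I, I' \in \I$ — namely when $L(e) \cap L(d) \neq \emptyset$ for an adjacent side $s(d)$, so that the ambiguous top-level interval at the shared vertex $v$ lies in both $L(e)$ and $L(d)$. Thus a symbolic sequence $I_{i_0}\ldots I_{i_N}$ mapping to $w$ is determined by $w$ together with, at each step, a choice of which interval carries the label $e_{i_r}$; the constraint is that consecutive choices must be compatible with the transition matrix $P$, i.e. $f(I_{i_{r-1}}) \supset I_{i_r}$.

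First I would set up the bijection between elements of $\pi^{-1}(w)$ and admissible "lifts" of $w$: since by Theorem~\ref{thm:hypothesis} the word $w$ itself is uniquely determined, counting $\pi^{-1}(w)$ amounts to counting admissible interval-sequences projecting to it. Next I would show that the only freedom is confined to the geometry of ambiguous intervals at vertices; concretely, reading $w$ left to right traces out the sequence of copies $\R, e_{i_0}\R, e_{i_0}e_{i_1}\R, \ldots$, and the interval $I_{i_r}$ must be the one "pointed at" by this path — which is forced \emph{unless} the path is turning around a vertex in the maximal way, i.e. unless we are inside a cycle of full length $n(v)$ or at the junction between consecutive cycles realizing the exceptional length pattern $n(v_1)-1, n(v_2)-1, \ldots, n(v_{n-1})-1, n(v_n)$ versus $n(v_1), n(v_2)-1, \ldots, n(v_{n-1})-1, n(v_n)-1$. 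This is exactly the special-chain configuration described in the Remark preceding the proposition. So I would: (a) argue that if $w$ does not end in a special chain then every interval $I_{i_r}$ is uniquely determined, giving $|\pi^{-1}(w)| = 1$; (b) argue that if $w$ \emph{does} end in a special chain then there are exactly two admissible lifts — the "upper" and "lower" sides of the common line $t \subset \T$ along which the copies of $\R$ accumulate — and no more, since the special-chain structure is the unique obstruction to determinacy, giving $|\pi^{-1}(w)| = 2$.

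For the upper bound $|\pi^{-1}(w)| \le 2$ I would invoke the word-problem results of~\cite{BiS, SInf, SMarkov} recalled in the Remark: two distinct shortest words representing the same $\Gamma$-element and differing in \emph{every} letter must be either both full cycles of length $n(v)$ or both special chains of the two complementary length patterns — and these are precisely the two representatives of a single $g$ that can simultaneously lie in $\pi(\Sigma_F)$. Since $\pi$ restricted to words is injective (Theorem~\ref{thm:hypothesis}), any two symbolic preimages of $w$ project to the \emph{same} word, so the ambiguity is purely interval-level, and the geometric picture of copies of $\R$ touching a common line $t$ — with "all except possibly the first or last lying on the same side of $t$" — shows there are at most two choices (which side of $t$), hence at most two lifts. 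The equality case is then pinned down by checking that when $w$ ends in a special chain both lifts are genuinely admissible under $P$, which follows by unwinding the level bookkeeping at the vertices $v_1, \ldots, v_n$ exactly as in Lemma~\ref{lem:BtoA}.

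The main obstacle I expect is step (b): verifying that a special-chain suffix produces \emph{exactly} two admissible symbolic lifts and not more, and in particular that the ambiguity does not "leak" to the left of where the special chain begins. This requires a careful induction reading $w$ from the right, showing that once one is outside the terminal special chain the interval at each position is rigidly forced by the transition rule $f(I_{i_{r-1}}) \supset I_{i_r}$ together with the constraint that the eventual projection is $w$; the delicate point is that an ambiguous top-level interval at an \emph{interior} vertex of $w$ does not create a genuine choice because only one of the two candidate intervals can be continued compatibly to realize the remaining letters of $w$ — this is where the hypothesis that the special chain is \emph{terminal} is used essentially. Handling the boundary cases $|\dd\R| = 3, 4$ (where some $A(e)$ may be empty or extra adjacencies occur, cf. Theorem~\ref{thm:hypothesis}(ii),(iii)) will need the same case analysis already carried out in the proof of Proposition~\ref{thm:irred}.
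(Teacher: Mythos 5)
Your outline identifies the right objects (interval-level ambiguity at top-level intervals, the special-chain configuration, the two sides of a common line $t$), but as it stands it has two genuine gaps, and one of your proposed mechanisms points in the wrong direction. First, the step that makes the special chain \emph{terminal} is not an induction from the right showing that intervals outside the suffix are ``rigidly forced''; it is a forward-propagation statement: if two admissible interval sequences with the same image first differ at position $k$, they differ at \emph{every} position $r>k$. This is almost immediate once observed --- since $\pi(I_{i_k})=\pi(I_{j_k})$, the map $f$ acts on both intervals by the \emph{same} M\"obius transformation $\pi(I_{i_k})^{-1}$, hence is injective on $I_{i_k}\cup I_{j_k}$ and sends the two disjoint intervals to disjoint sets --- but without it you have no reason the divergent region is a suffix at all, and your ``does the ambiguity leak to the left'' worry is answered by this lemma rather than by the right-to-left induction you sketch. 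Second, your upper bound $|\pi^{-1}(w)|\le 2$ is circular as written: you derive ``at most two choices (which side of $t$)'' from the assumption that every ambiguity already has the special-chain shape, but that is precisely what must be proved. The paper gets both the bound and the special-chain structure simultaneously from a single combinatorial analysis: given one preimage, a case analysis (if $I_{i_r}\subset C(v)\cap L(e)$ then $I_{j_r}\subset C(v)$ or $I_{j_r}=A(e)$; if $I_{i_r}=A(e)$ then $I_{j_r}$ is adjacent to $A(e)$; levels strictly decrease under $f$ down to some $A(d)$) shows the second sequence is forced step by step, and that the projected letters organize into consecutive cycles of lengths $n(v_i)-1$. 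That level bookkeeping is the actual content of the proof and is absent from your proposal.

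A smaller point: your appeal to the word-problem results on two distinct shortest words differing in every letter is a red herring here. By the uniqueness clause of Theorem~\ref{thm:hypothesis}, the two symbolic preimages of $g$ project to the \emph{identical} word $e_{i_0}\ldots e_{i_N}$ letter by letter, so the two-shortest-words dichotomy (which concerns distinct words representing the same group element) is not what produces the multiplicity; you in fact note this yourself immediately afterward, which leaves that paragraph without a working argument for the bound. Your instinct that the equality case requires checking that both lifts of a terminal special chain are admissible under $P$ is correct and is the easy half; the hard half is the forced-adjacency analysis above.
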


\begin{proof} [Proof of Proposition~\ref{thm:almostinjective}]
This is an immediate corollary   of Proposition~\ref{prop:almostinjective}.
 Notice that a special chain is completely specified
by its initial two letters (which determine
 the direction of the cycle at the first vertex) and
 the length of the initial cycle.
It follows that there exits a constant $K$ depending only on $\R$
such that
the total number of special chains of length exactly
$n$ is bounded by $K$,
independent of $n$. Since the total number of words in $\Gamma$
of length $n$ grows exponentially
with $n$, and since a special chain can be continued to
arbitrary length,  the result follows.
\end{proof}

We
establish several lemmas before proving Proposition~\ref{prop:almostinjective}.

 \begin{lemma}
 \label{lem:nevernever} Suppose that
$\pi (I_{i_0}\ldots I_{i_n})= \pi (I_{j_0}\ldots I_{j_m})$.
Then $n=m$ and
$\pi (I_{i_r})= \pi (I_{j_r})$ for $r=0, \ldots,n$. Moreover if
$I_{i_k} \neq I_{j_k}$ for some $k$ then $I_{i_r} \neq I_{j_r}$ for any $r>k$.
 \end{lemma}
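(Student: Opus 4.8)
The plan is to exploit two features of the coding established earlier: the Markov property of $f$ (namely $f(I)\cap J\neq\emptyset \Rightarrow f(I)\supset J$), and the fact, from Theorem~\ref{thm:hypothesis}, that $\pi(\Sigma_F)$ is length-preserving and gives a \emph{unique} representative for each $g\in\Gamma$. First I would observe that if $I_{i_0}\ldots I_{i_n}$ is an allowed word then $\pi(I_{i_0}\ldots I_{i_n}) = e_{i_0}\cdots e_{i_n}$ where $e_{i_r} = \pi(I_{i_r})$, and this is by construction a shortest word of length $n+1$. Hence if $\pi(I_{i_0}\ldots I_{i_n}) = \pi(I_{j_0}\ldots I_{j_m})$, then comparing lengths of the shortest representatives forces $n=m$, and the uniqueness clause of Theorem~\ref{thm:hypothesis} forces the two words in $\Gamma_0$ to agree letter by letter: $\pi(I_{i_r}) = \pi(I_{j_r})$ for all $r$.

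The substantive point is the last sentence: once the two symbol sequences \emph{differ} at some index $k$, they can never again \emph{agree}. For this I would argue by induction, reducing to the one-step claim: if $I_{i_{k}}\neq I_{j_{k}}$ but $I_{i_{k+1}} = I_{j_{k+1}}$, we reach a contradiction. The key geometric input is how the symbols $I_{i_0}\ldots I_{i_n}$ relate to $f$: by definition of $\Sigma_F$, we have $f(I_{i_r})\supset I_{i_{r+1}}$ for each $r$, and in fact — tracing back through the definition of the coding in Section~\ref{sec:coding} — the cylinder $[I_{i_0}\ldots I_{i_n}]$ corresponds under the iterated inverse branches to a nested sequence of subintervals of $\dd\DD$, with $\pi(I_{i_0}\ldots I_{i_n})$ acting as $e_{i_0}\cdots e_{i_n}$ sending the cylinder image to $I_{i_n}$ (or more precisely to $f^n$ of the starting interval). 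Concretely, set $e = e_{i_0} = e_{j_0} = \cdots$ common to both (true for indices $<k$), so that after applying $(e_{i_0}\cdots e_{i_{k-1}})^{-1}$ — equivalently applying $f^{k}$ — we may assume $k=0$: we have $I_{i_0}\neq I_{j_0}$, $f(I_{i_0})\supset I_{i_1}$, $f(I_{j_0})\supset I_{j_1}$, and $I_{i_1}=I_{j_1}$, with $\pi(I_{i_0}) = \pi(I_{j_0}) = e$. But then $f_{|I_{i_0}} = e^{-1} = f_{|I_{j_0}}$, so both $I_{i_0}$ and $I_{j_0}$ are mapped by the \emph{same} map $e^{-1}$ onto supersets of $I_{i_1}$. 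Since $I_{i_0}, I_{j_0}\in\I$ are disjoint open intervals and $e^{-1}$ is injective, $e^{-1}(I_{i_0})$ and $e^{-1}(I_{j_0})$ are disjoint, so they cannot both contain the same interval $I_{i_1}$; contradiction.

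The main obstacle I anticipate is making the reduction ``apply $f^k$ and assume $k=0$'' fully rigorous: one must confirm that differing at index $k$ with agreement \emph{before} $k$ means $I_{i_0}\ldots I_{i_{k-1}}$ and $I_{j_0}\ldots I_{j_{k-1}}$ are literally the same symbols (not merely equal under $\pi$), so that the common group element $e_{i_0}\cdots e_{i_{k-1}}$ really does carry $I_{i_k}$ and $I_{j_k}$ to their images under one and the same inverse branch, and that the chain conditions $f(I_{i_r})\supset I_{i_{r+1}}$ survive. This is where I would be careful: the hypothesis gives agreement of the $\pi$-images only, so I should first run the argument from $r=0$ upward, using at each step that if $I_{i_r} = I_{j_r}$ then $f_{|I_{i_r}} = f_{|I_{j_r}}$ (both equal $\pi(I_{i_r})^{-1}$), which is exactly the ingredient that propagates equality — until the first place they differ, after which the disjointness argument above shows equality can never resume. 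Everything else — the length comparison and the letter-by-letter agreement of the $\Gamma_0$-words — is an immediate consequence of Theorem~\ref{thm:hypothesis} and needs no further work.
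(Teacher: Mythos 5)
Your proof is correct and is essentially the paper's own argument: $n=m$ and the letterwise agreement of $\pi$-images follow from length preservation and the uniqueness clause of Theorem~\ref{thm:hypothesis}, and the persistence of inequality follows because $f$ restricts to the single injective map $\pi(I_{i_k})^{-1}$ on the two disjoint intervals $I_{i_k}$ and $I_{j_k}$, so their images (each containing the next symbol) are disjoint. The reduction ``apply $f^k$ and assume $k=0$'' that you flag as a possible obstacle is in fact unnecessary, as you yourself observe at the end: the one-step disjointness argument applies directly at index $k$ using only $\pi(I_{i_k})=\pi(I_{j_k})$, which you already have from uniqueness.
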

 \begin{proof}
  Suppose that
$\pi (I_{i_0}\ldots I_{i_n})= \pi (I_{j_0}\ldots I_{j_m})$. Since the
images of both sequences are shortest, $n=m$.
Moreover because of unique representation in $\Gamma$ by sequences in the image of $\pi$, see Theorem~\ref{thm:hypothesis}, we have
$\pi (I_{i_r})= \pi (I_{j_r})$ for $r=0, \ldots,n$.

Now suppose that $I_{i_k} \neq I_{j_k}$. By definition,  $f|_I = \pi(I)^{-1}$.
 Since   $\pi (I_{i_k})= \pi (I_{j_k})$,
we see  that $ f$ is injective on  $I_{i_k} \cup I_{j_k}$ and the result
follows.
\end{proof}

\begin{lemma}
\label{lem:3steps}
Suppose that
$\pi (I_{i_0}\ldots I_{i_n})= \pi (I_{j_0}\ldots I_{j_n})$ with
 $ I_{i_0}  \neq  I_{j_0}$, and suppose
 that $0 \le r <n$. Then: \begin{enumerate}
\renewcommand{\labelenumi}{(\roman{enumi})}
\item if $I_{i_r} \subset C(v) \cap L(e) $ for $e \in \Gamma_0$
and   $v \in \dd e$, then
 either $I_{j_r} \subset C(v)$  or $I_{j_r} = A(e)$;
\item if $I_{i_r} = A(e) $ for $e \in \Gamma_0$
then $I_{j_r}$  is adjacent to $A(e) $;
\item if  $I_{i_r}, I_{j_r} \subset C(v) \cap L(e) $ for $e \in \Gamma_0$
and some vertex $v \in \dd e$, and  if
$\lev(I_{j_r} ) < lev (I_{i_r}) = k$ and $r+k \le n$,
then $f^{k-1}(I_{i_r}) =I_{i_{r+k-1}} = A(d)$ for some $d \in \Gamma_0$.
\end{enumerate}
\end{lemma}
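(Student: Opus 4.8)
The plan is to analyze the three situations separately, in each case using the fact that $\pi(I_{i_r}) = \pi(I_{j_r})$ for all $r$ (Lemma~\ref{lem:nevernever}) together with the geometric description of $f$ on the various pieces $A(e)$, $M(e)$, $C(v)$ introduced before Lemma~\ref{lem:BtoA}. Throughout, write $e = e(r) \in \Gamma_0$ for the common value $\pi(I_{i_r}) = \pi(I_{j_r})$, so that $f$ restricted to either $I_{i_r}$ or $I_{j_r}$ is $e^{-1}$. The key structural input is the one already exploited in Lemma~\ref{lem:BtoA}: $f$ carries an interval of level $k$ in the crown $C(v)\cap L(e)$ to an interval of level $k-1$ in the crown $C(e^{-1}v)\cap L(e^{-1})$ at the image vertex, preserving the cyclic arrangement of labels around the vertex.

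For (i): if $I_{i_r}\subset C(v)\cap L(e)$ with $v\in\dd e$, then $I_{i_r}$ has some level $k>1$ at $v$. Since $\pi(I_{j_r}) = e$ as well, $I_{j_r}\subset L(e)$; but $L(e)$ is, by Lemma~\ref{lem:nomeeting} and the level bookkeeping, the union of $A(e)$ together with the top-level intervals of $C(v)$ and $C(w)$ (where $\dd e = \{v,w\}$), more precisely $L(e) = A(e)\cup(C(v)\cap L(e))\cup(C(w)\cap L(e))$. I would argue $I_{j_r}$ cannot lie in $C(w)\cap L(e)$: this would force (chasing the images forward, since $I_{i_r}$ and $I_{j_r}$ must continue to have equal $\pi$-images and by Lemma~\ref{lem:nevernever} must stay distinct) a contradiction with the fact that the two crowns at the ends of $L(e)$ sit on opposite sides. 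Hence $I_{j_r}\subset C(v)$ or $I_{j_r} = A(e)$.

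For (ii): if $I_{i_r} = A(e)$, then $I_{j_r}\subset L(e)$ and $I_{j_r}\neq A(e)$ (by distinctness, which propagates forward by Lemma~\ref{lem:nevernever}, and which must already hold here or one reduces to the case $r=0$); since the only intervals of $\I$ contained in $L(e)$ other than $A(e)$ itself are the top-level crown intervals at the two ends of $L(e)$, and these are exactly the two intervals adjacent to $A(e)$, the claim follows. For (iii): here both $I_{i_r}$ and $I_{j_r}$ lie in $C(v)\cap L(e)$ with $\lev(I_{j_r}) < \lev(I_{i_r}) = k$. Applying $f$ repeatedly and invoking the level-dropping rule from the proof of Lemma~\ref{lem:BtoA}, after $k-1$ steps $I_{i_{r+k-1}} = f^{k-1}(I_{i_r})$ has level $1$ at the image vertex $e_{\bullet}v$, i.e.\ it equals $A(d)$ for the appropriate $d\in\Gamma_0$; simultaneously $f^{k-1}(I_{j_r})$, having started from strictly smaller level, has level $0$, and the level-$0$ interval is exactly the one adjacent to $A(d)$, consistent with (ii) and giving no contradiction — the point of (iii) is precisely to pin down that $I_{i_{r+k-1}} = A(d)$.

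The main obstacle I anticipate is keeping the orientation/cyclic-order bookkeeping around each vertex consistent under iteration of $f$: one must be sure that when two distinct intervals with the same $\pi$-image are pushed forward, the one of higher level really does reach level $1$ (becoming an $A(d)$) exactly $k-1$ steps later, and that the lower-level partner degrades to level $0$ at the same moment rather than, say, falling into a different crown. This is handled by the observation (already used in Lemma~\ref{lem:BtoA}) that $f$ maps $C(v)\cap L(e)$ into a single crown $C(e^{-1}v)$ dropping the level by exactly one, so the two partners travel through a coherent sequence of nested crowns; the hypothesis $r+k\le n$ guarantees we do not run off the end of the word before this forced collision occurs.
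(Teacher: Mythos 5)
Your part (iii) matches the paper's argument (iterate $f$, drop the level by one each step while both intervals stay in a common crown, and conclude that at step $k-1$ the interval of level $1$ is no longer in any crown and hence is some $A(d)$). However, parts (i) and (ii) have genuine problems.

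The serious error is in (ii). You claim that the only intervals of $\I$ contained in $L(e)$ besides $A(e)$ are ``the top-level crown intervals at the two ends of $L(e)$,'' which you identify with the two intervals adjacent to $A(e)$. This is false in general: writing $\dd e=\{v,w\}$, the set $C(v)\cap L(e)$ is a union of $n(v)-1$ intervals of $\I$ (those of levels $2,\dots,n(v)$ at $v$), and similarly for $w$, so as soon as more than two geodesics of $\T$ meet at a vertex of $s(e)$ there are intervals $I\subset L(e)$ with $\pi(I)=e$ that are \emph{not} adjacent to $A(e)$. A static enumeration of candidates therefore cannot establish (ii). The paper's proof instead pushes forward one step: for $I\subset L(e)$ not adjacent to $A(e)$, the image $e^{-1}(I)$ lies in $C(\dd e^{-1})\subset L(e^{-1})\cup L(x)\cup L(y)$ (where $s(x),s(y)$ are the sides adjacent to $s(e^{-1})$), whereas $e^{-1}(A(e))$ lies outside that union; provided $L(x)$ and $L(y)$ are disjoint this forces $\pi(I_{i_{r+1}})\neq\pi(I_{j_{r+1}})$, contradicting Lemma~\ref{lem:nevernever}. (The case $e=e^{-1}$ needs a separate word.)

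In (i) you correctly sense that the contradiction comes from chasing the images forward, but the reason you offer --- ``the two crowns at the ends of $L(e)$ sit on opposite sides'' --- is not an argument and does not by itself preclude $I_{j_r}\subset C(w)\cap L(e)$. The missing step is the concrete computation that $f(C(v)\cap L(e))\subset L(x)$ while $f(C(w)\cap L(e))\subset L(y)$, together with the disjointness of $L(x)$ and $L(y)$; this disjointness is exactly where the standing hypotheses of Theorem~\ref{thm:hypothesis} (either $|\dd\R|>3$, via Lemma~\ref{lem:nomeeting}, or a vertex of $\R$ on $\dd\DD$) are used, and your sketch never invokes them. Without that input the next letters need not differ and the argument does not close.
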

\begin{proof}

\noindent \emph{Assertion (i):} Let $w$ be the other vertex in $\dd e$. If the result is false, then
$I_{j_r} \subset C(w) \cap L(e) $. Let $s(x)$ and $s(y)$ be the sides of $\R$
adjacent to $s(e^{-1})$.
Note that $f|_{I_{j_r}} = e^{-1}$ so that
 $f(C(v) \cap L(e))  \subset L(x)$, say, and $f(C(w) \cap L(e) ) \subset L(y)$.

By the hypotheses  of Theorem~\ref{thm:hypothesis}
either $ |\dd \R | > 3$ or $\R$ has a vertex at $\infty$.
In both cases
  $L(x)$ and $L(y)$ are disjoint. In the first case this is clear by Lemma~\ref{lem:nomeeting}. For the second,
   observe that  we may assume that both vertices in $\dd e^{-1}$ are in $ \DD$,
   since otherwise at least one  of $C(v)$ and  $C(w)$
   is empty and there is nothing to prove. This forces $\pi(I_{i_{r+1}} )\neq \pi(I_{j_{r+1}} )$ contrary to  hypothesis, which gives the result.

\noindent \emph{Assertion (ii):} Suppose first that $e \neq e^{-1}$. Observe that the image under $e^{-1}$ of any interval in $I \subset L(e) $ but
not adjacent to   $A(e) $ is contained in   $C(\dd e^{-1})$
and is thus contained in $L(e^{-1}) \cup L(x) \cup L(y)$ where as above $s(x),s(y)$
are the two sides of $\R$ adjacent to
$s(e^{-1})$.
On the other hand, the image  under $e^{-1}$ of $A(e)$ is outside
$L(e^{-1}) \cup L(x) \cup L(y)$. Thus provided $L(x)$ and $ L(y)$ are disjoint, $\pi(J)  \notin \{ e,x,y\}$ for any $J \in \I$ contained in $ f(A(e))$.
The result follows as above if $|\dd \R| >3$. The special case $|\dd \R|=3$
is easily treated separately.

Finally suppose  that $e= e^{-1}$. If $I \subset C(\dd e)$ then $f(I) \subset C(\dd e)$
but $f(A(e))$ is \emph{outside} $L(e)$ which is impossible. 

\noindent \emph{Assertion (iii):}  The map $f$ decreases level
and at each stage with $ t < k$, $I_{i_{r+t}}$ and $I_{j_{r+t}}$ are in the  crown
of a common vertex.
At  step $k-1$,  $I_{i_{r+k-1}}$ has level $1$
so that by (i), $I_{i_{r+k-1}} = A(d)$ for some $d \in \Gamma_0$.
\end{proof}

\begin{proof} [Proof of Proposition~\ref{prop:almostinjective}]
Suppose that
$$\pi (I_{i_0}\ldots I_{i_n})= \pi (I_{j_0}\ldots I_{j_n}).$$
 Without loss of generality
we may as well assume
that $I_{i_0} \neq I_{j_0}$.
By Lemma~\ref{lem:nevernever}, $I_{i_r} \neq I_{j_r}$ for any $r>0$.

Suppose first that $r<n$ and that $I_{i_r} \subset C(v) \cap L(e)$
has level $k >1$ in the clockwise direction starting at the highest level interval
at vertex $v$. (The proof if the interval is in the anticlockwise direction is similar; obviously the direction of all cycles are then reversed.) Then $I_{i_{r+1}}$ has level $k -1$  at vertex
$e^{-1} v$ and is outside $L(e^{-1})$. Thus $f(I_{i_r}) \subset L(e')$
where $s(e')$ is the  side of $\R$ adjacent to $s(e^{-1})$ in clockwise order
round $\dd \R$.
Therefore $e^{-1} e'^{-1} = \pi (I_{i_r})\pi ( I_{i_{r+1}})$
is an anticlockwise cycle at  $e^{-1} v$.

Inductively, it follows that $\pi (I_{i_r}) \pi ( I_{i_{r+1}})\ldots \pi ( I_{i_{r+k-1}})$
is an anticlockwise cycle. Further, by Lemma~\ref{lem:3steps}  (iii)
we see that $I_{i_{r+k-1}} = A(d)$  for some $d \in \Gamma_0$ and
that $I_{j_{r+k-1}}$ is adjacent to $I_{i_{r+k-1}}$, where
$d^{-1}= \pi ( I_{i_{r+k-1}})$.

 Let the sides of $\dd \R$
in clockwise order
from $s(d)$ be  $s(d),s(c),s(b)$ (so that the interior labels of sides in clockwise order are 
$d^{-1}, c^{-1},b^{-1}$).
Let $v$ be the common vertex of  $s(d)$ and $s(c)$
and let $w$ be the common vertex of  $s(c)$ and $s(b)$.
From the proof of Lemma~\ref{lem:3steps} (ii), it follows that
$I_{j_{r+k}}$ is  necessarily the highest level interval at $v$,
and that $\pi(I_{j_{r+k}}) = c$.

Now  $\pi ( I_{i_{r+k-1}})\pi(I_{j_{r+k}}) = d^{-1} c$ is an anticlockwise cycle  at $v$, hence so also is \\
 $\pi (I_{i_r})\pi ( I_{i_{r+1}})\ldots \pi ( I_{i_{r+k-1}})c$. Furthermore $ c^{-1}\pi(I_{j_{r+k}}) =  c^{-1}b$
is an anticlockwise cycle at $w$. This means the cycles
$$\pi (I_{i_r})\pi ( I_{i_{r+1}})\ldots \pi ( I_{i_{r+k-1}}) \ \ {\rm and}
\ \ \pi(I_{j_{r+k}}) = \pi(J_{j_{r+k}})$$ are consecutive. Moreover
 $I_{i_{r+k}}$ and $I_{j_{r+k}}$ are both contained in the crown
 at $w$, in fact
$I_{i_{r+k}}$ is the    level $n(w) -1$ interval in the crown at $w$,
adjacent to $I_{j_{r+k}}$ going around clockwise.

Now the first part of the argument repeats so that
$\pi(I_{j_{r+k}})$ is (assuming $n$ is sufficiently large)
the first term in  an anticlockwise cycle of length
of length $n(w) -1$.
A similar argument   in the case
$I_{i_0} = A(e)$ completes the proof.
\end{proof}

\subsection{An Ergodic Theorem for Fuchsian Groups}
\label{sec:final}
As before, let $\Gamma$  be a finitely generated non-elementary 
Fuchsian group acting in the hyperbolic disk $\DD$, and assume
that a fundamental domain  $\R$ for $\Gamma$ has even corners and satisfies $|\dd \R| \ge 5$.
As before, let $\Gamma_0$ be the generating set corresponding to $\R$.  For $g\in \Gamma$,
let $|g|$ be the length of the shortest word in $\Gamma_0$ representing $\Gamma$, and
for $n\in {\mathbb N}$, let
$$
S(n)=\{g\in\Gamma: |g|=n\}
$$
be the sphere of radius $n$ in $\Gamma$. Finally,  let
$K_n$ be the cardinality of $S(n)$. Observe that $K_n$ grows exponentially
and so, by the Borel-Cantelli Lemma, the contribution of ``non-injective" elements of Proposition~\ref{thm:almostinjective} to the spherical averages is negligible.
Propositions \ref{matfun},
 \ref{thm:almostinjective} and~\ref{thm:strictlyirreducible} now
imply
\begin{introthm}
\label{generalfuchsian}
Let $\Gamma$ act ergodically on a probability space $(X, \nu)$ by measure-preserving
transformations, and, for $g\in\Gamma$, let $T_g$ be the corresponding transformation.
Suppose $\Gamma_0$ is a geometric set of generators associated to a Markov partition as in Section~\ref{sec:markov}, and suppose either that $|\dd \R| \geq 5$, or, if $|\dd \R| < 5$, that  the associated transition matrix is strictly irreducible. Then, measuring word length with respect to the generators $\Gamma_0$,  for any $\varphi\in L_1(X, \nu)$ we have
\begin{equation}
\frac1N\sum_{n=0}^{N-1}\frac1{K_n} \sum\limits_{g\in S(n)} \varphi\circ T_g   \to \int\limits_X fd\nu
\end{equation}
both $\nu$-almost surely and in $L_1(X, \nu)$ as $N\to\infty$.
\end{introthm}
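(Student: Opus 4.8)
The plan is to deduce Theorem~\ref{generalfuchsian} from the free-semigroup ergodic theorem of Proposition~\ref{matfun} by transferring the spherical averages over $(\Gamma,\Gamma_0)$ to weighted word-averages in the subshift $\Sigma_F$. Concretely, let $\A = \I$ be the alphabet and $P$ the transition matrix from Section~\ref{sec:coding}. For each symbol $I \in \A$ put $T_I = T_{\pi(I)}$, the measure-preserving transformation of $(X,\nu)$ attached to the generator $\pi(I) \in \Gamma_0$. Because $\pi$ preserves length (Theorem~\ref{thm:hypothesis}), an allowable word $w = I_{i_0}\ldots I_{i_{n-1}}$ of length $n$ maps to an element $\pi(w) \in \Gamma$ with $|\pi(w)| = n$, and $T_w = T_{I_{i_0}}\cdots T_{I_{i_{n-1}}} = T_{\pi(w)}$. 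Thus the numerator $\sum_{|w|=n} A(w)\,\varphi\circ T_w$ with $A = P$ counts each $g \in S(n)$ with multiplicity equal to the number of allowable $\A$-words representing it, i.e.\ $\#\pi^{-1}(g)$.

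The first key step is therefore a counting comparison. Write $s_n^P\varphi$ for the semigroup sphere average \eqref{ssna}. Since $\pi$ is surjective onto $\Gamma$ and length-preserving, $\sum_{|w|=n} P(w) = \sum_{g \in S(n)} \#\pi^{-1}(g)$, while the genuine spherical sum is $\sum_{g \in S(n)} \varphi\circ T_g$ with each $g$ counted once. By Proposition~\ref{thm:almostinjective}, $\#\pi^{-1}(g) \in \{1,2\}$ and the number $e_n := \#\{g : |g|=n,\ \#\pi^{-1}(g)=2\}$ satisfies $e_n/n \to 0$; meanwhile $K_n$ grows exponentially (this is standard for non-elementary Fuchsian groups, and is the point where we use $|\dd\R| \ge 5$ via Proposition~\ref{thm:strictlyirreducible} together with irreducibility of $P$). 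Hence $\left| \frac{1}{K_n}\sum_{g\in S(n)}\varphi\circ T_g - s_n^P\varphi \right|$ is controlled, pointwise and in $L_1$, by $\frac{e_n}{K_n}\|\varphi\|_\infty$-type bounds after a routine reduction from $L_1$ to bounded $\varphi$ (by density), and more carefully, $\frac{1}{K_n + e_n}\big(\sum_{g}\varphi\circ T_g + \sum_{g: \#\pi^{-1}(g)=2}\varphi\circ T_g\big)$ differs from $\frac{1}{K_n}\sum_g \varphi\circ T_g$ by a term whose $L_1$-norm is $O(e_n/K_n)\|\varphi\|_1$. Taking Ces\`aro averages in $n$, the error terms $\frac1N\sum_{n<N} O(e_n/K_n)$ tend to $0$ since $e_n/K_n \to 0$ exponentially fast (indeed $e_n/K_n$ is summable, so one does not even need Ces\`aro smoothing here, but it does no harm).

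The second key step is to apply Proposition~\ref{matfun} to conclude that $c_N^P(\varphi) = \frac1N\sum_{n<N} s_n^P\varphi \to \mathbb E(\varphi \mid \mathcal B)$ a.e.\ and in $L_1$. For this we must verify that $P$ is strictly irreducible: irreducibility is Proposition~\ref{thm:irred}, and for $|\dd\R| \ge 5$ strict irreducibility is Proposition~\ref{thm:strictlyirreducible} (in the cases $|\dd\R| < 5$ it is assumed in the hypotheses of Theorem~\ref{generalfuchsian}). It remains to identify $\mathbb E(\varphi\mid\mathcal B)$ with $\int_X \varphi\, d\nu$: the $\sigma$-algebra $\mathcal B$ of sets invariant under all $T_I = T_{\pi(I)}$, $I \in \A$, coincides with the $\sigma$-algebra of sets invariant under every $T_e$, $e \in \Gamma_0$ (since $\pi(\A) = \Gamma_0$), which, as $\Gamma_0$ generates $\Gamma$, equals the $\sigma$-algebra of $\Gamma$-invariant sets; ergodicity of the $\Gamma$-action makes this trivial, so $\mathbb E(\varphi\mid\mathcal B) = \int_X \varphi\, d\nu$ a.e. Combining with the comparison of Step~1 and the triangle inequality gives the theorem both $\nu$-a.e.\ and in $L_1$.

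I expect the main obstacle to be making the Step~1 comparison fully rigorous for general $\varphi \in L_1$ rather than bounded $\varphi$: the naive bound on the discrepancy between the true spherical average and $s_n^P\varphi$ uses $\|\varphi\|_\infty$, so one must argue by approximating $\varphi$ in $L_1$ by a bounded function $\psi$, handling $\varphi - \psi$ via the maximal inequality that underlies Proposition~\ref{matfun} (and which is implicit in \cite{bufrokh}) together with an analogous elementary maximal bound for the operators $\frac1{K_n}\sum_{g\in S(n)} T_g$ acting on $L_1$ — the latter following because $\frac1{K_n}\sum_{g\in S(n)}|\varphi\circ T_g|$ has $L_1$-norm at most $\|\varphi\|_1$ for each fixed $n$, so that the Ces\`aro maximal function is dominated, after the summable error correction, by that of $c_N^P$. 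Once this approximation scheme is set up, everything else is bookkeeping with the combinatorial inputs already established.
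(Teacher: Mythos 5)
Your proposal is correct and is essentially the paper's own (very tersely sketched) argument: transfer the spherical average to the subshift average $s_n^P$, control the discrepancy coming from the at-most-two-to-one fibres of $\pi$ via Proposition~\ref{thm:almostinjective} together with the exponential growth of $K_n$, and apply Proposition~\ref{matfun} with strict irreducibility supplied by Propositions~\ref{thm:irred} and~\ref{thm:strictlyirreducible}, identifying the limit by ergodicity. The maximal-inequality machinery you anticipate needing in your final paragraph is not required: your own estimate $O(e_n/K_n)\,\|\varphi\|_{1}$ on the $L_1$-norm of the $n$-th error term is summable in $n$, so these errors tend to $0$ $\nu$-almost everywhere by Borel--Cantelli (this is precisely the remark the paper makes), and no reduction to bounded $\varphi$ is needed.
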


Theorem \ref{generalfuchsian} implies as a special case Theorem \ref{main}.

\section*{Acknowledgements}
A.I. B. is an Alfred P. Sloan Research Fellow.
During work on this project, he
was supported in part by Grant MK-4893.2010.1 of the President of the Russian Federation,
by the Programme on Mathematical Control Theory of the Presidium of the Russian Academy of Sciences,
by the Programme 2.1.1/5328 of the Russian Ministry of Education and Research,  by the Edgar Odell Lovett Fund at Rice University, by
the National Science Foundation under grant DMS~0604386, and by 
the RFBR-CNRS grant 10-01-93115.

\end{document}